\documentclass[12pt]{article} 
\usepackage[letterpaper,left=2.5cm,top=2.5cm,right=2.5cm,bottom=2.5cm]{geometry}  
\usepackage[none]{hyphenat}
\sloppy


\usepackage{latexsym} 
\usepackage{layout}
\usepackage{float}
\usepackage{textcomp}
\usepackage{amstext}
\usepackage{multicol} 
\usepackage{longtable} 
\usepackage{lscape} 
\usepackage{multirow} 
\usepackage{array} 
\usepackage{amssymb}
\usepackage{amsmath} 
\usepackage{amsthm}
\usepackage[T1]{fontenc} 
\usepackage{graphicx}
\usepackage{verbatim}

\newtheorem{theorem}{Theorem}[section]
\newtheorem{lemma}[theorem]{Lemma}

\theoremstyle{definition}
\newtheorem{definition}[theorem]{Definition}

\theoremstyle{remark}

\theoremstyle{plain}
\newtheorem{prop}[theorem]{Proposition}





\usepackage[pdftex]{color}
\usepackage[pdftex,colorlinks=true,linkcolor=blue,citecolor=black,urlcolor=cyan]{hyperref}


\newlength{\LL}\settowidth{\LL}{5000}



\title{ \Huge \bf An Integral Approach to Prescribing Scalar Curvature Equations \footnote{This work was supported by NSFC (No. 12141103)}}
\author{Chen Ruosi \footnote{crs22@mails.tsinghua.edu.cn}
~,~Jian Huaiyu \footnote{hjian@mail.tsinghua.edu.cn}
~,~Zhou Xingchen  \footnote{zxc3zxc4zxc5@stu.xjtu.edu.cn}
~~ \\ {\it Department of Mathematical Sciences} \\ {\it Tsinghua University}
}

\date{\today}

\begin{document}

\maketitle

\hrulefill

\begin{abstract}
We develop an integral approach to obtain interior a priori $C^{1,1}$ estimates for convex solutions of prescribing scalar curvature equations $\sigma_2(\kappa) = f(x)$ as well as the Hessian equations $\sigma_2(D^2u) = f(x)$. This new approach can deal with the case when $f$ is of weaker regularity. As a result, we prove that the $C^{1,1}$ modules of the  solutions depend only on  the Lipschitz modules of  $f(x)$, instead of the  $\|f\|_{C^k}$ for some $k\geq 2$ in all the papers we have known up to now.
\end{abstract}

\section{Introduction}\label{sec1}
Consider a $C^2$ isometrically immersed hypersurface $M$ in $\mathbb {R}^{n+1}$ with position vector $X=(x,u(x))$. Denote its  principle curvatures  $\kappa=(\kappa_1,\kappa_2,\cdots, \kappa_n)$. The problem of  prescribing curvature surfaces relates to a long standing research on the following general equations,
\begin{eqnarray*} \sigma_m(\kappa) = f(x, u ,\nabla u), \end{eqnarray*}
where $\kappa=(\kappa_1,\kappa_2,\cdots, \kappa_n)$ and $\sigma_m(\kappa)$ is the elementary symmetric polynomials with respect to $\kappa_1,\kappa_2,\cdots, \kappa_n$ of $m$-order. One can see, for example, the Minkowski problem considered by Cheng-Yau \cite{MR4519019}, Nirenberg \cite{MR0058265} and Pogorelov \cite{MR0478079}, the prescribing general Weingarten curvature problems by Alexandrov \cite{MR0086338} and  Guan-Guan \cite{MR1933079}, and the prescribing curvature measure problems in convex geometry by Guan-Li-Li \cite{MR2954620} and  Guan-Lin-Ma \cite{MR2507106}.

In this paper, we are concerned with the case $m=2$, that is the prescribing scalar curvature problem, and the corresponding equation is read as
\begin{eqnarray}
    \label{eq:sigma_kappa}
    \sigma_2(\kappa) = f(x).
\end{eqnarray}
We will also study its analogue, the well-known $2$-Hessian equation
\begin{eqnarray}\label{eq:sigma_lambda}
\sigma_2(D^2u):=\sigma_2(\lambda_u) = f(x),
\end{eqnarray} where $\lambda_u=(\lambda_1, \cdots, \lambda_n)$ are the eigenvalues of the Hessian matrix $D^2u$.
The interior a priori $C^{2}$ estimates means to answer the
the following question  positively:
\begin{itemize}
    \item [\textbf{I}] For smooth solutions to equations (\ref{eq:sigma_kappa}) or (\ref{eq:sigma_lambda}), for positive mean curvature or $\Delta u$, can we have the interior a priori Hessian estimates
    \begin{eqnarray*}
    |D^2u(0)|\le C(n, \|u\|_{C^1(B_1(0))},\|f^{-1}\|_{L^\infty(B_1(0))},\|f\|_{C^k(B_1(0))})
    \end{eqnarray*} for some number $k\geq 0$?
\end{itemize}  Here and below, $C(\cdots)$ denotes a positive constant depending only on the quantities $\cdots$.

This question has been studied extensively and there is a lot of positive answers, which we are going to
list.   For equation (\ref{eq:sigma_kappa}) and (\ref{eq:sigma_lambda}) in dimension two it was obtained by Heinz \cite{heinz1959elliptic} and Pogorelov \cite{pogorelov1964}. For general dimension,  Pogorelov-type estimates were obtained in Chow-Wang \cite{MR1835381} for equation (\ref{eq:sigma_lambda}) with 2-convex boundary condition, and Sheng-Urbas-Wang \cite{ShengUrbasWang04} with affine boundary condition for equation (\ref{eq:sigma_kappa}), but their estimates depend on $\|f\|_{C^2}$. It was showed in Guan-Qiu \cite{guan2019interior}  that the Hessian bound can be obtained under certain convex conditions $\sigma_3\ge-C$, by a pointwise argument using maximal principle. Qiu \cite{qiu2019interior} \cite{qiu2017interiorHessian} removed the convexity condition in dimension 3. Integral approach for equation (\ref{eq:sigma_lambda}) with $f\equiv 1$ was employed by Warren-Yuan \cite{warren2009hessian} for dimension $3$ and Shankar-Yuan \cite{shankar2020hessian}  and for general dimensions but with semi-convexity assumption on the  solutions respectively. Their basic idea is that mean value property of $\lambda_{\max}$, the maximum eigenvalue of $D^2u$,  can be derived from Jacobi inequality.
Recently, Shankar-Yuan \cite{shankar2023hessian} used a doubling argument to obtain the Hessian estimate for equation (\ref{eq:sigma_lambda}) in dimension 4   without any assumption on the convexity of the solution.
Compact method for equation (\ref{eq:sigma_lambda}) in general dimensions was used in McGonagle-Song-Yuan \cite{MR3913193}. We also mention the global Hessian and gradient estimates for these two equations, see the works of Caffarelli-Nirenberg-Spruck \cite{MR0806416} \cite{Caffarelli1986starshape}, Trudinger \cite{trudinger1990dirichlet} \cite{MR1368245}, Lin-Trudinger \cite{MR1281990}, Ivochkina-Lin-Trudinger \cite{MR1449405}, Guan-Ren-Wang \cite{MR3366747}, and Ren-Wang \cite{MR4011801}.

However, all those Hessian estimates appeared in the papers we have known up to now depend strongly on the quantity  $\|f\|_{C^k}$ for some $k\geq 2$. Hence, a natural question  is:
\begin{itemize}
    \item  [\textbf{II}]  What is the minimum number $k$ in Question {\bf I} ?
\end{itemize}

Notice that when $n=2$, equations (\ref{eq:sigma_kappa}) and (\ref{eq:sigma_lambda}) are reduced to Monge-Amp\`ere equations and the solutions are convex. By the foundational regularity results  due to Caffarelli \cite{c}, one may ask:
\begin{itemize}
 \item[\textbf{III}] Fix $\alpha\in[0, 1]$. For strictly convex  smooth solutions to  equations (\ref{eq:sigma_kappa}) and (\ref{eq:sigma_lambda}),   can we have the interior a priori $C^{2,\alpha}$ estimates
    \begin{eqnarray*}
    \|D^2u\|_{C^{\alpha}(B_{\frac12}(0))}\le C(n, \|u\|_{C^1(B_1(0))},\|f^{-1}\|_{L^\infty(B_1(0))},\|f\|_{C^\alpha(B_1(0))})?
    \end{eqnarray*}
    \end{itemize}
This estimate for  Monge-Amp\`ere equations  was proved  by Caffarelli in \cite{c} for $0<\alpha<1$ and by Jian-Wang in \cite{jw} for $\alpha\in[0, 1]$.  Hence Question \textbf{III}    is correct in the two dimensional case ($n=2$).  By the way, we would like to mention that the global a priori $C^{2,\alpha}$ estimates for  Monge-Amp\`ere equations with
Dirichlet boundary value, natural boundary value and oblique boundary value were obtained in \cite{s,tw}, \cite{clw} and \cite{jt}, respectively.

In this paper, we aim to take one step towards Problems \textbf{II} and \textbf{III}.
Specifically, we explore the possibility of obtaining interior a priori Hessian estimates which depends on  $\|f\|_{C^k}$  for some $k$ less than $2$ but larger than  $\alpha$.

The following two theorems are the  main results of this paper.

\begin{theorem}\label{Thm_kappa}
If $M=\{(x, u(x)): x\in B_{2}(0)\}$ is a smooth convex solution to equation (\ref{eq:sigma_kappa})   with $\inf_{x\in B_{2}(0)}f(x)>0$, then we have
\begin{eqnarray*}
|\kappa(0)|\le C(n,\|u\|_{C^{0,1}(B_{2}(0))}, \|f^{-1}\|_{L^{\infty}(B_{2}(0))}, \|f\|_{C^{0,1}(B_{2}(0))}).
\end{eqnarray*}
\end{theorem}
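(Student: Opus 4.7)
The plan is to bypass any pointwise maximum-principle argument and instead derive an $L^p$-Moser iteration directly on the hypersurface $M$. The first step is to set up a Jacobi-type inequality for the logarithm of the largest principal curvature. Let $F^{ij}=\partial\sigma_2/\partial h_{ij}$; this is positive definite on the $\Gamma_2^+$ cone. At the point of interest, in normal coordinates diagonalizing the second fundamental form, consider $\phi=\log\kappa_{\max}$ (or, to avoid multiplicity issues at the top eigenvalue, a smooth surrogate such as $p^{-1}\log\mathrm{tr}(h^p)$). Differentiating $\sigma_2(\kappa)=f$ twice along tangential directions and invoking the Codazzi and Simons identities yields, on the smooth set of $\phi$, an inequality of the schematic form
\[
F^{ij}\nabla_i\nabla_j\phi \;\geq\; c_0\, F^{ij}\nabla_i\phi\,\nabla_j\phi \;-\; C\bigl(|A|^2+1\bigr) \;+\; \frac{F^{ij}\nabla_i\nabla_j f}{f} \;-\; C\,\frac{|\nabla f|^2}{f^2}.
\]
This is the usual Jacobi inequality in the convex case; the appearance of $\nabla^2 f/f$ is exactly the obstruction that has forced previous work to assume $f\in C^2$.

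The second step is an integration-by-parts device designed to eliminate the $\nabla^2 f$ contribution. Multiply the above inequality by $\eta^{2}\kappa_{\max}^{\,p}/f$ for a cutoff $\eta$ supported in $M\cap B_{3/2}$ and a large exponent $p$, and integrate over $M$ against the area element $dV_g$. Integrating $F^{ij}\nabla_i\nabla_j\phi$ by parts once produces only first derivatives of $\phi$, and the divergence of $F^{ij}$ involves only $\nabla f$ because $F^{ij}h_{ij,k}=\partial_k f$. The crucial move is to integrate the $F^{ij}\nabla_i\nabla_j f/f$ term by parts once, transferring a derivative onto the test function, so that it becomes $\nabla f$ paired against $\nabla(\eta^{2}\kappa_{\max}^{\,p}/f^{2})$. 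The bad cross term $p\,\eta^{2}\kappa_{\max}^{\,p-1}(\nabla f)(\nabla\kappa_{\max})/f^2$ is then controlled by Cauchy--Schwarz and absorbed into a fraction of the good term $c_0 F^{ij}\nabla_i\phi\,\nabla_j\phi$. After all absorptions one obtains a reverse-H\"older-type estimate
\[
\int_{M\cap B_1}\kappa_{\max}^{\,p+\alpha}\,dV_g \;\leq\; C\bigl(n,p,\|u\|_{C^{0,1}},\|f^{-1}\|_{L^\infty},\|f\|_{C^{0,1}}\bigr)\int_{M\cap B_{3/2}}\kappa_{\max}^{\,p}\,dV_g
\]
for some exponent gain $\alpha>0$, in which only $\nabla f$ (not $\nabla^2 f$) appears.

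Finally, the Michael--Simon Sobolev inequality on $M$ (whose constant depends only on $n$) turns this reverse-H\"older inequality into a genuine Moser iteration, and iterating on a sequence of shrinking concentric balls gives $\|\kappa_{\max}\|_{L^\infty(M\cap B_{1/2})}\leq C$ with the required dependence, which in particular bounds $|\kappa(0)|$ as asserted. The initial $L^{p_0}$ estimate for some small $p_0$ needed to feed the iteration comes directly from convexity and the $C^{0,1}$ bound on $u$: the area of $M\cap B_2$ is controlled by $\|u\|_{C^{0,1}}$, and the equation together with the divergence theorem provides the seed integral bound on $\kappa_{\max}$. The main obstacle is producing the Jacobi inequality with the correct positive gradient term $c_0 F^{ij}\nabla_i\phi\,\nabla_j\phi$: because $\sigma_2$ is not concave (unlike $\sigma_n^{1/n}$), this requires exploiting the convexity of $M$ and Newton--MacLaurin-type inequalities to arrange the Simons commutator terms so that the gradient quadratic form does have the favourable sign needed for the absorption in the previous step. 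A secondary technical issue is the non-smoothness of $\kappa_{\max}$ at points where the top eigenvalue has multiplicity, handled by approximation with smooth symmetric functions of $h$ or by arguing in the viscosity/subsolution sense.
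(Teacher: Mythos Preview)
Your overall strategy---Jacobi inequality, integrate by parts to kill $\nabla^2 f$, then Moser-iterate---matches the paper's philosophy, but there is a genuine gap at the cutoff step, and the paper's argument is built precisely to close it.

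The linearized operator $F^{ij}=\partial\sigma_2/\partial h_{ij}$ has eigenvalues $H-\kappa_i$, so $\operatorname{tr}F=(n-1)H$ while its smallest eigenvalue is only $H-\kappa_{\max}\geq f/H$. For a standard radial cutoff $\eta$, the error term from integration by parts is $F^{ij}\nabla_i\eta\,\nabla_j\eta\lesssim H|\nabla\eta|^2$; equivalently $\Delta_F\eta$ can be as negative as $-CH$. Carrying this through, the Caccioppoli step reads
\[
\int \eta^{2}H^{p-1}|\nabla_F H|^2 \;\lesssim\; \int H^{p+2}|\nabla\eta|^2,
\]
so the reverse-H\"older inequality you claim, $\int_{B_1}\kappa_{\max}^{\,p+\alpha}\leq C\int_{B_{3/2}}\kappa_{\max}^{\,p}$, does \emph{not} follow: the right side carries one extra power of $H$ that cannot be absorbed. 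Combined with $|\nabla v|^2\leq (H/f)\,|\nabla_F v|^2$, the Sobolev step only yields an iteration of the form $\int H^{np/(n-2)}\leq \bigl(C\int H^{p+2}\bigr)^{n/(n-2)}$, which self-improves only once $p>n-2$; the $L^1$ seed you get from convexity and the divergence theorem is therefore insufficient to start the iteration when $n\geq 4$. This is exactly the obstruction in Urbas's papers that the authors cite.

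The paper's decisive new ingredient is a cutoff that defeats this degeneracy. Instead of a radial $\eta$, they take $\varphi(w-u)$ with $\varphi(t)=(t^+)^4$ and $w$ a carefully chosen \emph{$2$-convex} quadratic. For such $w$ one has $\Delta_F w\geq -C$, hence $\Delta_F\varphi\geq -C$: bounded, not of order $H$. This produces a ``boundary Jacobi inequality'' $\Delta_F(\varphi H)\geq -CH+[\Delta f-C|\nabla f|^2]\varphi$, and testing against $(\varphi H)^p$ together with the identity $H=W\Delta u$ (integrating the ordinary $\Delta$ by parts, not $\Delta_F$) gives the clean recursion $\int H^{p+1}\varphi^{p}\leq Cp\int H^p\varphi^{p-1}$ for every $p\geq 1$. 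The set $\{w>u\}$ is forced to lie compactly in $B_1$ by Mooney's strict-$2$-convexity result, which supplies the required $2$-convex barrier; this geometric input is entirely absent from your outline. A minor further point: in the convex case the Simons commutator contribution is actually $\geq 0$, so the $-C(|A|^2+1)$ term in your schematic Jacobi inequality should not appear; the paper also works with $H$ rather than $\kappa_{\max}$, sidestepping the eigenvalue-multiplicity issue.
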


\begin{theorem}\label{Thm_sigma}
 If $u$ is a smooth convex solution to equation (\ref{eq:sigma_lambda}) in $B_{2}(0)$ with $\inf_{x\in B_{2}(0)}f(x)>0$, then we have
\begin{eqnarray*}
|D^2u(0)|\le C(n,\|u\|_{C^{0,1}(B_{2}(0))}, \|f^{-1}\|_{L^{\infty}(B_{2}(0))}, \|f\|_{C^{0,1}(B_{2}(0))}).
\end{eqnarray*}
\end{theorem}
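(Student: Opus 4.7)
The plan is to bound $|D^2 u(0)|$ via an integral/iteration argument built on the divergence structure of the linearized operator. For equation (\ref{eq:sigma_lambda}) the linearization is
$$F^{ij} := \frac{\partial \sigma_2}{\partial u_{ij}} = \sigma_1(D^2 u)\,\delta^{ij} - u_{ij},$$
whose rows are divergence free: $\partial_i F^{ij}=0$. Differentiating (\ref{eq:sigma_lambda}) once in the direction $e_\gamma$ yields $F^{ij} u_{ij\gamma}=f_\gamma$, so a single derivative of $f$ controls a single $F$-weighted derivative of $D^2u$. Because $u$ is convex, I may replace $|D^2 u|$ by $\Lambda := \lambda_{\max}(D^2u)$, which is the quantity I will actually estimate.

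First I would set up a Pogorelov-type auxiliary function $w = \log \Lambda + \tfrac{\beta}{2}|\nabla u|^2 - A u$ with $\beta,A$ depending only on $\|u\|_{C^{0,1}}$ and $\|f^{-1}\|_{L^\infty}$, and compute $F^{ij}w_{ij}$ by the standard eigenvalue perturbation at a point where $\Lambda$ is simple. The outcome has the schematic form
$$F^{ij}w_{ij} \;\ge\; c_0\, F^{ij}(\log\Lambda)_i(\log\Lambda)_j + c_0\,\sigma_1 - C\,\|f\|_{C^{0,1}}^2 - \Lambda^{-1}\,\partial_{\gamma\gamma} f,$$
where the final term is exactly what forces earlier proofs to require $f\in C^2$. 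Instead of taking a pointwise maximum, I test this inequality against $\eta^{2q}w_+^{p-1}$ for a cutoff $\eta\in C_c^\infty(B_2)$ and integrate over $B_2$. Using $\partial_iF^{ij}=0$, integration by parts on the bad term transfers the outstanding derivative off $f$ onto $\eta$, $w_+^{p-1}$, and $\Lambda^{-1}$, so only $|\nabla f|$ survives in the data. Young's inequality then absorbs the new gradient terms into the good Jacobi term $F^{ij}(\log\Lambda)_i(\log\Lambda)_j$ and into $\sigma_1\Lambda^{p-1}$, yielding a reverse-Sobolev inequality of the form
$$\int_{B_r}\bigl|\nabla(\Lambda^{p/2}\eta^q)\bigr|^2 \;\le\; C(n,p)\,(R-r)^{-2}\int_{B_R}\Lambda^{p}+C(n,p)\,\|f\|_{C^{0,1}}^{2}$$
on concentric balls. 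A starting $L^1$ bound for $\Lambda$ is immediate from $\int_{B_2}\sigma_2\le C\|f\|_{L^\infty}$ together with the Newton-tensor identity $\sigma_2=\tfrac12\partial_i(F^{ij}u_j)$, integrated against a cutoff. Moser iteration on the Caccioppoli inequality then produces $\Lambda(0)\le C(n,\|u\|_{C^{0,1}},\|f^{-1}\|_{L^\infty},\|f\|_{C^{0,1}})$, which is the desired estimate.

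The main obstacle I expect is the coupling between steps two and three: I must ensure that every integration by parts leaves no uncontrolled cubic term in the third derivatives of $u$ on the right-hand side. The good Jacobi term only absorbs expressions of the form $F^{ij}u_{ij\alpha}u_{ij\beta}$ weighted by $\Lambda^{-2}$, so each derivative transfer must produce either such a weighted quadratic or a lower-order term in $\sigma_1$ and $\Lambda$. Fine-tuning $w$, the exponents $(p,q)$, and the combinatorics of the Newton identities is the technically delicate point. A secondary difficulty is the multiplicity of $\Lambda$, which I would address by approximating $\Lambda$ by a smooth symmetric function of the Hessian, for instance $\bigl(\operatorname{tr}((D^2u)^{2k})\bigr)^{1/2k}$, and passing to the limit $k\to\infty$ after the uniform estimates are in place.
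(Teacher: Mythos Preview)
Your scheme for handling the second derivative of $f$ by a single integration by parts is the right reflex, and the paper does exactly that. The genuine gap is in the Caccioppoli step and hence in the Moser iteration: the displayed inequality
\[
\int_{B_r}\bigl|\nabla(\Lambda^{p/2}\eta^q)\bigr|^2 \le C(n,p)(R-r)^{-2}\int_{B_R}\Lambda^{p}+C
\]
is too strong by exactly one power of $\Lambda^2$ on the right. The reason is the degeneracy of $F^{ij}=\sigma_1\delta^{ij}-u_{ij}$: for convex $u$ one only has $F^{ij}\xi_i\xi_j\ge (f/\sigma_1)|\xi|^2$, so the good Jacobi term
\[
F^{ij}(\log\Lambda)_i(\log\Lambda)_j\,\Lambda^{p-1}\ \ge\ \frac{f}{\sigma_1}\,\Lambda^{p-3}|\nabla\Lambda|^2\ \sim\ |\nabla\Lambda^{(p-2)/2}|^2,
\]
while the cutoff error after Young is $F^{ij}\eta_i\eta_j\,\Lambda^{p-1}\le (n-1)\sigma_1|\nabla\eta|^2\Lambda^{p-1}\sim\Lambda^{p}|\nabla\eta|^2$. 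The extra ``$c_0\sigma_1$'' you list on the good side does not cure this: it matches the cutoff error in size and gives only $\int_{B_r}\Lambda^p\le C\int_{B_R}\Lambda^p$, with no gain of integrability. The correct recursion is therefore $p\mapsto (p-2)\tfrac{n}{n-2}$, which climbs only when $p>n$; your $L^1$ starting bound is not enough to launch it. This is precisely the obstruction in Urbas' earlier work that the paper sets out to remove.

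The paper's fix is not a sharper Jacobi inequality but a different cutoff. Instead of a radial $\eta$, they use $\varphi\bigl((w-u)_+\bigr)$ with $w$ a \emph{$2$-convex} barrier touching $u$ from above at the origin; Mooney's strict $2$-convexity result furnishes such a $w$ with $\{u<w\}\Subset B_1$. The point is that the term coming from derivatives hitting this cutoff is $\varphi'\,\Delta_F(w-u)$, and $2$-convexity of $w$ together with $\Delta_F u=2f$ gives $\Delta_F(w-u)\ge -C$, so no extra power of $\Lambda$ appears. Testing the ``boundary Jacobi inequality'' against $(\varphi\,\Delta u)^p$ then yields the clean recursion $\int\varphi^{p}(\Delta u)^{p+1}\le Cp\int\varphi^{p-1}(\Delta u)^p$, which bootstraps from $L^1$ to $L^{n+1}$ on the section $\{u<w\}$ and feeds the (degenerate) Moser iteration. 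Working with $\Delta u$ rather than $\lambda_{\max}$ also sidesteps your multiplicity issue. In short, your outline is missing the barrier/cutoff mechanism that supplies the $W^{2,p}$ starting point; without it the iteration does not close.
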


\
We  remark that the most usual approach  to get the interior a priori Hessian estimates for  (\ref{eq:sigma_kappa}) and (\ref{eq:sigma_lambda}) is pointwise argument, which  requires to differentiate the equations twice, then  construct  auxiliary functions and apply the maximal principle.  So  $\|f\|_{ C^2}$ is necessary for these approaches. When $f\equiv 1$,
Shankar-Yuan \cite{shankar2020hessian} proved a mean value inequality for equation $\sigma_2=1$, thus providing an integral approach to Hessian estimates.

We recall the method of integrating by parts in linear elliptic equations, and there is a well known technique called Moser iteration. Ideally, we may differentiate the equation as many times as we want, then integrate by parts to eliminate the singular derivatives. When we try to apply this technique to the fully nonlinear equation $\sigma_2=f$,  two challenges appear, which bring us many difficulties. The first is  to seek for a  divergence structure to initiate the integration by parts, and the second, more challenging, is that the linearized operator of $\sigma_2$ may be degenerate. Such degeneracy will cause the Moser iteration to be unable to continue, as shown in the works of Urbas \cite{MR1777141,MR1840289}.

We illustrate the idea of our new approach to overcome these difficulties. The recent work of Shankar-Yuan \cite{shankar2023hessian} has showed that the mean curvature or $\Delta u$ is in fact strong subharmonic (Jacobi inequality). This gives an ideal divergence structure to apply the method of integration by parts. However, unless we address the degeneracy of the $\sigma_2$ operator, our approach will not essentially differ from the work of Urbas.
We make an important observation that, during integration by parts, such degeneracy appears only on the boundary, so our strategy is  to cut the boundary off.

Our new argument, which we call boundary Jacobi inequality approach, involves two cut-off functions.
The first is to initiate the Moser iteration, which leads to $C^{1,1}$ estimates in terms of integral of mean curvature or $\Delta u$ to some power $p=p(n)$.
The second gives us another iteration process to bound the above integral through integration by parts.
With each steps, the power increases, ultimately leading to $W^{2,p}$ estimates for any prescribed integer $p>1$.
In detail, the second cut-off function is the composition of a monotonic function and a truncation function supported in $B_1(0)$.
This truncation function is obtained through cutting the surface $M$ with a 2-convex function, which departs from $M$ at the origin by a uniform distance.
When the solution is convex, such 2-convex function exists in a long thin strip contained in $B_1(0)$.
The 2-convexity of the cutting function is essential, since the level set of the solution can be degenerate due to the unknown boundary values.
For example, consider the function $v(x,y,z) = x^2+y^2$, whose level set $\{v=0\}$ degenerates to a line crossing the origin. We also refer to the works of Chou-Wang \cite{MR1835381} and Mooney \cite{MR4246798}.

In all, the second cut-off function enables us to estimate the integral of certain power of mean curvature or $\Delta u$ in some irregular domains contained in $B_1(0)$.
Generally, such integral cannot be estimated in $B_1(0)$ since the $\sigma_2$ operator may be degenerate.
This cut-off function, together with the Jacobi inequality, helps us to eliminate the sigularity at the boundary.
We point out that this argument can proceed only when the solution is convex. Otherwise the Jacobi inequality may fail, as showed in Shankar-Yuan \cite{shankar2023hessian}.

The paper is organized as follows. In Section 2 we first review some standard facts of moving frames, then develop a boundary Jacobi inequality, which plays a key role in our argument, and finally  use it to prove Theorem \ref{Thm_kappa}. In Section 3 we prove Theorem \ref{Thm_sigma},  the case of Hessian equation. Although the argument is similar to that of Theorem \ref{Thm_kappa}, we still provide an elaborate proof for the convenience of the readers who   are more concerned about the Euclidean case.

\section{ A boundary Jacobi inequality approach}

In this section, we will first prove a boundary Jacobi inequality for convex solution to scalar curvature equation (\ref{eq:sigma_kappa}). Then the Pogorolov $W^{2,p}$ and $C^{1,1}$ estimates follow from this inequality and by integration method. Let us first introduce some  notations and basic formulas of moving frames that will be used from time to time without any citation in this section.

\

We choose an orthonormal frame $\{e_{1},e_{2},\cdots,e_{n}, \nu\}$ in $\mathbb{R}^{n+1}$ such that $e_{1},e_{2},\cdots,e_{n}$ are tangent to $M$ and $\nu$ is the outer normal on $M$. Denote the dual form and the connection form by $\omega^i$ and $\omega^j_i$, where $i$ and $j$ range from $1$ to $n$. The second fundamental form is denoted by $h_{ij}$.
Recall the following fundamental formulas:
\begin{eqnarray*}
X_{ij} & = & -h_{ij}\nu\begin{array}{cc}
 & (Gauss\,\,formula)\end{array}\\
 \nu_i & = & h_{ij}e_j \begin{array}{cc}
 & (Weingarten\,\,equation)\end{array}\\
h_{ijk} & = & h_{ikj}\begin{array}{cc}
 & (Codazzi\,\,equation)\end{array}\\
R_{ijkl} & = & h_{ik}h_{jl}-h_{il}h_{jk}\begin{array}{cc}
 & (Gauss\,\,equation)\end{array}.
\end{eqnarray*}
We also have the following commutator formula: 
\begin{eqnarray*}
h_{ijkl}-h_{ijlk} & = & \sum_m h_{im}R_{mjkl}+\sum_m h_{mj}R_{mikl}.
\end{eqnarray*}
Combining Codazzi equation, Gauss equation and the commutator formula, we have 
\begin{eqnarray}
h_{iikk}=h_{kkii}+h_{kk}\sum_{m}h_{im}^2-h_{ii} \sum_{m} h_{mk}^{2}.\label{eq:commute}
\end{eqnarray}
The mean curvature of $M$ is defined by 
\begin{eqnarray*}
H=\sum_{i}h_{ii}.
\end{eqnarray*}
We choose the outer normal $\nu=\frac{1}{W}(Du,-1)$, where $W=\sqrt{1+|Du|^2}$. Since the position vector of $M$ is $X=(x, u(x))$, we will view $u$  as a $C^2$ function on $M$ by $u =  \langle X,E_{n+1} \rangle$, then
\begin{eqnarray*}
u_i&=&\langle e_i,E_{n+1}\rangle,\\
u_{ij}&=&-h_{ij}\langle \nu,E_{n+1}\rangle =\frac{h_{ij}}{W}.
\end{eqnarray*}
Here and below, $E_{k}$ denote the unit vector in $\mathbb R^{n+1}$, the $k$-th component of which is $1$, and the others are all zero, $1\leq k\leq n+1$. $u_i, u_{ij}$ are the covariant derivatives of $u$ with respect to the moving frame. So the mean curvature can be expressed by
\begin{eqnarray*}
H=W\sum_iu_{ii}=W\Delta  u.
\end{eqnarray*}
From the above qualities, we get an important relation that 
$$
|\nabla u|^2={\sum^n_{i=1}\langle e_i,E_{n+1}\rangle ^2}\le 1.
$$
Generally, we can treat a function $v$ in $\mathbb R^n$ as a function on $M$ by defining 
$$
v(X)=v(<X,E_1>,<X,E_2>,\cdots,<X,E_n>).
$$
This gives $v_i = \sum^n_{l=1}\partial_lv<e_i,E_l>$, and therefore
$$
|\nabla v|^2= \sum^n_{i,k,l=1}\partial_k v\,\partial_l v \,\langle e_i,E_k\rangle \langle e_i,E_l \rangle \le |Dv|^2.
$$
There are also cases when local frame $\{\partial_1,\cdots\partial_n,\nu\}$ is needed. The metric $g:=[g_{ij}]_{n\times n}$ is
$$
g_{ij}=<\partial_i,\partial_j>=\delta_{ij}+u_iu_j,
$$
and the inverse of the metric $g^{-1}:=[g^{ij}]_{n\times n}$ is given by
$$
g^{ij}=\delta_{ij}-\frac{u_iu_j}{W^2}.
$$
Using  \textit{II}$:=[h_i^j]_{n\times n}$ to denote the second fundamental form, we have
$$
h_i^j=\partial_i\left({\frac{u_j}{W}}\right), \ \ h_{ij}=\frac{u_{ij}}{W}.
$$
Finally, in local coordinates, let
$$
F(D^2u,Du):=\sigma_2(\kappa_1,\kappa_2,\cdots, \kappa_n),
$$
and denote $F^{ij}=\frac{\partial F}{\partial h_{ij}}$, then we have 
$$ F^{ij} = H g^{ij} - g^{is} h_{sk} g^{kj} = H g^{ij} - h^i_k g^{kj}, $$
that is 
\begin{equation} \label{eq:Fij}
    [F^{ij}]_{n\times n} = H g^{-1} - \textit{II}\, g^{-1}.
\end{equation}
See, for example, Guan-Qiu \cite{guan2019interior}.

The following notations will also be used throughout this section, 
$$
\Delta_F v :=F^{ij}v_{ij}, \ |\nabla_F v|^2 :=F^{ij}v_iv_j.
$$
One can verify that the linearized operator $\Delta_F$  has a divergence structure similar to the Laplace-Beltrami operator, which allows us to integrate by parts.

From now on to the end of this section, we assume $u$ and $f$ are the same as in Theorem \ref{Thm_kappa}. Note that $f$ is smooth in $B_2(0)$ by the smoothness of $u$ in equation (\ref{eq:sigma_kappa}).

\begin{lemma}\label{lemma:curvature_Trace_Jacobi}
    For each constant $\epsilon\in(0,1]$, we have
\begin{equation}\label{eq:curvature_Trace_Jacobi}
\Delta_FH-(2-\epsilon)H^{-1}|\nabla_FH|^2\ge \Delta f - C(\epsilon)f^{-1}|\nabla f|^2.
\end{equation}
\end{lemma}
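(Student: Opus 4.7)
The plan is to establish (\ref{eq:curvature_Trace_Jacobi}) by a direct tensorial computation. Fix a point $p\in M$ and choose a local orthonormal frame that diagonalizes the second fundamental form at $p$, so that $h_{ij}(p)=\kappa_i\delta_{ij}$ and, by (\ref{eq:Fij}), $F^{ij}(p)=(H-\kappa_i)\delta_{ij}$. Convexity ensures $\kappa_i\geq 0$ and hence $F^{ii}\geq 0$, which is essential in several signed estimates below.

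First I would differentiate $\sigma_2(\kappa)=f$ once along $e_k$ to obtain $\sum_i(H-\kappa_i)h_{ii,k}=f_{,k}$, and twice and summed on $k$ to obtain
\begin{eqnarray*}
\sum_k F^{ij,pq}h_{ij,k}h_{pq,k}+\sum_k F^{ij}h_{ij,kk}=\Delta_M f.
\end{eqnarray*}
For $\sigma_2$ one has $F^{ij,pq}=\delta^{ij}\delta^{pq}-\tfrac12(\delta^{ip}\delta^{jq}+\delta^{iq}\delta^{jp})$, so the first sum collapses to $|\nabla H|^2-|\nabla h|^2$. Using the commutator (\ref{eq:commute}) to swap $\sum_k h_{ii,kk}$ for $H_{,ii}$, and then invoking $\sum_i F^{ii}\kappa_i=2f$ together with $|A|^2=H^2-2f$, this produces a master identity of the form
\begin{eqnarray*}
\Delta_F H=\Delta_M f+\bigl(|\nabla h|^2-|\nabla H|^2\bigr)+P(\kappa,f),
\end{eqnarray*}
where $P$ is a polynomial in $\kappa_i,f$ which is nonnegative under convexity by Newton-Maclaurin type inequalities.

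The core of the argument will be the sharp Newton-Maclaurin estimate
\begin{eqnarray*}
|\nabla h|^2-|\nabla H|^2\geq (2-\epsilon)H^{-1}|\nabla_F H|^2-C(\epsilon)f^{-1}|\nabla f|^2.
\end{eqnarray*}
Using $HH_{,k}=f_{,k}+\sum_i\kappa_i h_{ii,k}$ from the once-differentiated equation, I would expand $|\nabla_F H|^2=\sum_k(H-\kappa_k)H_{,k}^2$ as a pure quadratic in $h_{ii,k}$ plus a cross-term in $f_{,k}$; the pure quadratic is controlled by the Newton-Maclaurin surplus $|\nabla h|^2-|\nabla H|^2$ with sharp coefficient $2$, while the cross-term is split by Cauchy-Schwarz with parameter $\epsilon$ into an $\epsilon H^{-1}|\nabla_F H|^2$ piece and a $C(\epsilon)f^{-1}|\nabla f|^2$ piece. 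The passage from $\Delta_M f$ to $\Delta f$ contributes additional terms of order $H\,|\nabla f|$ plus tangential Hessian corrections coming from Weingarten's equation applied to the extension $f\circ\pi$, and both types are absorbed in the same way.

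The main obstacle will be extracting the sharp constant $2$ in front of $H^{-1}|\nabla_F H|^2$. A crude Cauchy-Schwarz bound produces only a smaller coefficient, and the value $2$ requires matching the Newton-Maclaurin surplus to the precise combination $\sum_i F^{ii}H_{,i}^2$ term by term, relying essentially on convexity ($\kappa_i,F^{ii}\geq 0$) to fix all signs in the absorbed quantities.
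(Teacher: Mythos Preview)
Your outline tracks the paper's proof closely: differentiate twice, commute indices via (\ref{eq:commute}), isolate the cubic term $|\nabla h|^2-|\nabla H|^2$, show the commutator polynomial $P(\kappa)=(\sum\kappa_i)(\sum\kappa_i^3)-(\sum\kappa_i^2)^2\ge 0$ under convexity, and then prove that the quadratic form in $(h_{iik})$ dominates $(2-\epsilon)H^{-1}|\nabla_FH|^2$ up to $f$-gradient errors. Where you are vague the paper is concrete: the sharp constant $2$ is obtained not by a generic ``Newton--Maclaurin surplus'' argument but by applying, for each fixed $k$, a linear-algebra lemma (a diagonal matrix minus a rank-one term $L^TL$ is positive semidefinite iff $\sum_i a_i^{-1}\langle L,e_i\rangle^2\le 1$) to the form $3\sum_{i\neq k}h_{iik}^2+h_{kkk}^2-(1+\delta\frac{H-\kappa_k}{H})H_k^2$. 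Convexity enters not merely as a sign condition on $\kappa_i,F^{ii}$ but through the pointwise inequality $\kappa_k^2(H-\kappa_k)/H\le\kappa_k(H-\kappa_k)\le f$, which is what makes $\delta=2-\tfrac{\epsilon}{2}$ admissible.

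One step you should delete: in this section $\Delta f$ and $|\nabla f|^2$ already denote the intrinsic Laplace--Beltrami operator and gradient of $f$ on $M$ (see the conventions set up before the lemma, and the integration by parts $\int\varphi^{p+1}H^p\Delta f\,dM=-\int\varphi\nabla f\cdot\nabla[(\varphi H)^p]\,dM-\cdots$ in Proposition~\ref{lemma:curvature_W2p_estimate}). There is no passage from $\Delta_Mf$ to a Euclidean $\Delta f$. Your planned ``tangential Hessian corrections from Weingarten'' would introduce genuine Euclidean $D^2f$ terms that cannot be absorbed into $|\nabla f|^2$---this is precisely the $\|f\|_{C^2}$ dependence the paper is designed to avoid. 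The inequality is to be established with the intrinsic $\Delta f$ on the right, and then used only in integrated form where that term is removed by one integration by parts.
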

\begin{proof}
Differentiate equation (\ref{eq:sigma_kappa}) twice, we get
\begin{gather*}
F^{ij}h_{ijk} = f_k,\\
\sum^{n}_{i,j,s,t=1}(\delta_{ij}\delta_{st}-\delta_{it}\delta_{js})h_{ijk}h_{stk}+F^{ij}h_{ijkk} = f_{kk}.
\end{gather*}
Summing up the second equation from $k = 1$ to $n$, we have 
\begin{eqnarray*}
    \sum^n_{k=1}F^{ij}h_{ijkk} &=& \sum^n_{i,j,k=1}h_{ijk}^2-\sum^n_{k=1}(\sum^n_{i=1}h_{iik})^2+\Delta f
    \\
    &=& 3\sum^n_{i\neq k}h^2_{iik}+\sum^n_{k=1}h^2_{kkk}+\sum^n_{i\neq j\neq k}h^2_{ijk}-\sum^n_{k=1}(\sum^n_{i=1}h_{iik})^2+\Delta f.
\end{eqnarray*}
Choose an orthogonal frame such that the second fundamental form is diagonal at a point $p\in M$. We use equality (\ref{eq:commute}) and get
\begin{eqnarray*}
    \Delta_F H\ge
    3\sum^n_{i\neq k}h^2_{iik}+\sum^n_{k=1}h^2_{kkk} -\sum^n_{k=1}(\sum^n_{i=1}h_{iik})^2+\Delta f+ F^{ii}(\kappa_i\sum^n_{k=1}\kappa_k^2-\kappa_i^2H).
\end{eqnarray*}
Let $\delta \ge 0$ be a constant, we have $F^{ij}= \delta_{ij}(H-\kappa_i)$ at $p$, then
\begin{align*}
& \Delta_F H -\delta H^{-1}{|\nabla_F H|^2}\\
\ge \ & 3\sum^n_{i\neq k}h^2_{iik}+\sum^n_{k=1}h^2_{kkk}-\sum^n_{k=1}(1+\delta\frac{ H-\kappa_k}{H})(\sum^n_{i=1}h_{iik})^2+\Delta f+F^{ii}(\kappa_i\sum^n_{k=1}\kappa_k^2-\kappa_i^2H).
\end{align*}
Since we have $F^{ij}h_{ijk}=f_k$, denote
\begin{eqnarray*}
Q_k & := & 3\sum^n_{i\neq k}h^2_{iik}+h^2_{kkk}-(1+\delta\frac{H-\kappa_k}{H})(\sum^n_{i=1}h_{iik})^2 \\
& = & 3\sum^n_{i\neq k}h^2_{iik}+h^2_{kkk}-(1+\delta\frac{H-\kappa_k}{H})[\sum^n_{i=1}(1-\frac{H-\kappa_i}{H})h_{iik}+\frac {f_k}{H}]^2.
\end{eqnarray*}
To estimate the quadratic form $Q_k$, we introduce the following lemma (for the proof we refer to Shankar-Yuan \cite{SY22} and Zhou \cite{zhou2023hessian}).
Let \{$a_1,\cdots a_n$\} be positive constants, $n\ge1$, $L\in \mathbb{R}^n$. Consider the following $n\times n$ symmetric matrix
$$
\Lambda=\sum^{n}_{i=1}a_ie_i^Te_i-L^TL.
$$
It is proved that $\Lambda \succeq 0$ is equivalent to 
$$
1-\sum^{n}_{i=1}\frac{1}{a_i}|\langle L,e_i \rangle|^2\ge 0.
$$
Using this lemma, we estimate the following quadratic 
\begin{eqnarray*}
    \hat Q_k&=&1-\frac1{3}(1+\delta\frac{H-\kappa_k}{H})\sum_{i\neq k}(1-\frac{H-\kappa_i}{H})^2 - (1+\delta\frac{H-\kappa_k}{H})(1-\frac{H-\kappa_k}{H})^2\\
    &\ge& 1-\frac1{H^2}[\frac1{3}(1+\delta)\sum^n_{i\neq k}\kappa_i^2+\kappa_k^2+\delta\frac{\kappa^2_k(H-\kappa_k)}{H}]\\
    &\ge& 1-\frac1{H^2}[\frac1{3}(1+\delta)\sum^n_{i\neq k}\kappa_i^2+\kappa_i^2+\delta f].
\end{eqnarray*}
Clearly $\hat Q_k\ge0$ if we choose $\delta=2-\frac\epsilon2$, since $H^2= \sum^n_{i=1}\kappa_i^2+2f$. We  get
\begin{eqnarray*}
Q_k&\ge& -2(1+\delta\frac{H-\kappa_k}{H})[\sum^n_{i=1}(1-\frac{H-\kappa_i}{H})h_{iik}]\frac{f_k}H-(1+\delta\frac{H-\kappa_k}{H})(\frac{f_k}H)^2 \\
& = & -2(1+\delta\frac{H-\kappa_k}{H})(\sum^n_{i=1}h_{iik})\frac{f_k}{H} + (1+\delta\frac{H-\kappa_k}{H})(\frac{f_k}H)^2 \\
&\ge & -6|\sum^n_{i=1}h_{iik}|\cdot|\frac{f_k}H| = -6 \frac{|H_k \cdot f_k|}{H}.
\end{eqnarray*}
Since $H\ge \sqrt{f}>0$, we choose a small constant $\hat\epsilon=\frac{\epsilon f}{2}$,
\begin{equation}\label{eq_Q1_kappa}
Q_k\ge -\frac{\hat\epsilon}{H^2}H_k^2-\frac{C(1)}{\hat\epsilon}|\nabla f|^2.
\end{equation}
Combining inequality (\ref{eq_Q1_kappa}) and the relation $\frac{H-\kappa_k}{H}\ge \frac{f}{H^2}$ gives
\begin{eqnarray*}
\Delta_FH -(2-\epsilon)H^{-1}{|\nabla_F H|^2}\ge \Delta f- C(\epsilon)f^{-1}|\nabla f|^2+F^{ii}(\kappa_i\sum^n_{k=1}\kappa_k^2-\kappa_i^2H).
\end{eqnarray*}
As for the commutator terms, we have
\begin{eqnarray*}
F^{ii}(\kappa_i\sum^n_{k=1}\kappa_k^2-\kappa_i^2H) & = & 2 \sigma_2 \sum^n_{i=1}\kappa_i^2- H\sum_{i=1}^n (H-\kappa_i) \kappa_i^2 \\
& = & 2 \sigma_2 \sum^n_{i=1}\kappa_i^2- H^2\sum_{i=1}^n \kappa_i^2 + H \sum_{i=1}^n \kappa_i^3 \\
& = & (2 \sigma_2 - H^2)\sum_{i=1}^n \kappa_i^2 + H \sum_{i=1}^n \kappa_i^3 \\
& = & -(\sum_{i=1}^n \kappa_i^2)^2 + (\sum_{i=1}^n \kappa_i)(\sum_{i=1}^n \kappa_i^3) \geq 0.
\end{eqnarray*}
Thus we get the trace Jacobi inequality for the mean curvature $H$,
\begin{eqnarray*}
\Delta_FH -(2-\epsilon)H^{-1}{|\nabla_F H|^2}\ge \Delta  f- C(\epsilon)f^{-1}|\nabla f|^2.
\end{eqnarray*}
\end{proof}

\begin{lemma}[Boundary Jacobi inequality]\label{lemma:curvature_Boundary_Jacobi}
Let $u$ be a smooth convex solution to equation (\ref{eq:sigma_kappa}) in $B_2(0)$.
Let $\varphi(t)=(t^+)^4$, $t\in \mathbb{R}$ be a $C^2$ function. Suppose that $f$ is smooth. Then for each smooth function $w(x)$, 
\begin{align*}
\Delta_F [\varphi(w-u)H ]
&\ge [\varphi'(w-u)\Delta_F(w-u)]H+ [{\Delta f}-C(1)f^{-1}|\nabla f|^2]\varphi(w-u).
\end{align*}

\end{lemma}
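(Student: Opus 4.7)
The plan is to combine the trace Jacobi inequality from Lemma \ref{lemma:curvature_Trace_Jacobi} with the Leibniz rule for the linearized operator $\Delta_F$, using an algebraic absorption tailored to the specific $\varphi(t)=(t^+)^4$.

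First I would expand using the product rule for $\Delta_F$ (valid since $F^{ij}$ is symmetric), writing, with $g:=w-u$,
\[
\Delta_F\!\bigl[\varphi(g)H\bigr]=\varphi(g)\,\Delta_F H+H\,\Delta_F\varphi(g)+2F^{ij}\varphi(g)_i H_j.
\]
Applying the chain rule gives $\Delta_F\varphi(g)=\varphi'(g)\Delta_F g+\varphi''(g)|\nabla_F g|^2$ and $\varphi(g)_i=\varphi'(g)g_i$. Next I would invoke Lemma \ref{lemma:curvature_Trace_Jacobi} with a parameter $\epsilon\in(0,1]$ to be chosen, giving
\[
\varphi(g)\,\Delta_F H\ \ge\ (2-\epsilon)\varphi(g)H^{-1}|\nabla_F H|^2+\varphi(g)\bigl[\Delta f-C(\epsilon)f^{-1}|\nabla f|^2\bigr].
\]
Substituting back, the term $H\varphi'(g)\Delta_F g$ and the term involving $\Delta f$ are already what we want, so the whole task reduces to showing that the remaining three terms are nonnegative:
\[
(2-\epsilon)\varphi(g)H^{-1}|\nabla_F H|^2+H\varphi''(g)|\nabla_F g|^2+2\varphi'(g)F^{ij}g_i H_j\ \ge\ 0.
\]

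The key step is this absorption. By Cauchy--Schwarz in the positive semi-definite form $F^{ij}$,
\[
\bigl|2\varphi'(g)F^{ij}g_i H_j\bigr|\ \le\ (2-\epsilon)\varphi(g)H^{-1}|\nabla_F H|^2+\frac{(\varphi'(g))^2 H}{(2-\epsilon)\varphi(g)}|\nabla_F g|^2,
\]
so the bracket reduces to $H\bigl[\varphi''(g)-(\varphi'(g))^2/((2-\epsilon)\varphi(g))\bigr]|\nabla_F g|^2$, which is nonnegative provided
\[
(2-\epsilon)\,\varphi(g)\varphi''(g)\ \ge\ (\varphi'(g))^2.
\]
For $\varphi(t)=(t^+)^4$ one computes $\varphi\varphi''=12(t^+)^6$ and $(\varphi')^2=16(t^+)^6$, so the inequality holds iff $12(2-\epsilon)\ge 16$, i.e.\ $\epsilon\le 2/3$. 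Choosing for instance $\epsilon=1/2$ works and fixes $C(\epsilon)$ as the absolute constant $C(1)$ appearing in the statement.

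The main obstacle, and the reason for picking $\varphi(t)=(t^+)^4$ rather than some lower-order power, is precisely this concavity-type relation $(2-\epsilon)\varphi\varphi''\ge(\varphi')^2$: a lower power like $(t^+)^2$ or $(t^+)^3$ would not satisfy it for any admissible $\epsilon$, and conversely the Jacobi inequality of Lemma \ref{lemma:curvature_Trace_Jacobi} gives at best the factor $(2-\epsilon)$ on $H^{-1}|\nabla_F H|^2$, so no finer manipulation of the Hessian side is available. Once the absorption works, the positivity required of $H$ (which follows from $f>0$ and $H\ge\sqrt{f}$) and the smoothness of $f$ are the only remaining hypotheses used, and the inequality in the statement follows immediately.
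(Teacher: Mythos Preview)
Your proof is correct and follows essentially the same approach as the paper: expand $\Delta_F(\varphi H)$ by the product rule, invoke Lemma~\ref{lemma:curvature_Trace_Jacobi} with $\epsilon=\tfrac12$, and absorb the cross term $2F^{ij}\varphi_i H_j$ by Cauchy--Schwarz in the $F$-inner product, the absorption succeeding precisely because $\tfrac{3}{2}\varphi\varphi''\ge(\varphi')^2$ for $\varphi(t)=(t^+)^4$. The paper organizes the same computation by first isolating $\Delta_F\varphi-\tfrac{2}{3}\varphi^{-1}|\nabla_F\varphi|^2\ge \varphi'\Delta_F(w-u)$ and then combining, but the algebraic content and the choice $\epsilon=\tfrac12$ are identical; one minor point worth making explicit is that on $\{w\le u\}$ both sides vanish so the inequality is trivial there, and the division by $\varphi(g)$ is only performed where $g>0$.
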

\begin{proof}
We begin by direct calculation at the points where $w-u>0$,
\begin{align*}
&\Delta_F[\varphi(w-u)]=\varphi''(w-u)|\nabla_F (w-u)|^2+\varphi'(w-u)\Delta_F(w-u),\\
&|\nabla_F[\varphi(w-u)]|^2=[\varphi'(w-u)]^2|\nabla_F (w-u)|^2.
\end{align*}
Then we get
\begin{align*}
&\Delta_F[\varphi(w-u)]-\frac23|\nabla_F[\varphi(w-u)]|^2[\varphi(w-u)]^{-1}\\
&=\{\varphi''(w-u)-\frac23[\varphi(w-u)]^{-1}[\varphi'(w-u)]^2\}|\nabla_F (w-u)|^2+\varphi'(w-u)\Delta_F(w-u)\\
&\ge \varphi'(w-u)\Delta_F(w-u).
\end{align*}
We also know from Lemma \ref{lemma:curvature_Trace_Jacobi} (choose $\epsilon=\frac12$) that
$$
\Delta_FH -\frac{3}{2}H^{-1}{|\nabla_F H|^2}\ge \Delta f- C(1)f^{-1}|\nabla f|^2.
$$
Now we consider the boundary Jacobi inequality for the function $\varphi(w-u)H$ which vanishes at the level set $\{w=u\}$,
\begin{align*}
\Delta_F(\varphi H)
&= H\Delta_F\varphi+\varphi\Delta_FH+2F^{ij}\varphi_iH_j
\\
&\ge\{\Delta_F\varphi-\frac23\varphi^{-1}|\nabla_F\varphi|^2\}H
+\{\Delta_FH -\frac32H^{-1}{|\nabla_F H|^2}\}\varphi\\
&\ge [\varphi'\Delta_F(w-u)]H+[\Delta f- C(1)f^{-1}|\nabla f|^2]\varphi.
\end{align*} 
\end{proof}

\subsection{Pogorelov-type \texorpdfstring{$W^{2,p}$}{} estimates}
Now we use the boundary Jacobi inequality (Lemma \ref{lemma:curvature_Boundary_Jacobi}) to obtain a priori $W^{2,p}$ estimates for  equation (\ref{eq:sigma_kappa}).
We first prove the following lemma that concerns  2-convex functions.

\begin{definition}
We say that a $C^2$ function $w$ on $M$ is $2$-convex, if 
$$ \sigma_k(g^{-1}D^2w)>0,\ \ k=1,2. $$
\end{definition}

\begin{lemma}\label{lemma:2-convex_on_M}
    Suppose $w$  is 2-convex on $M$. Then 
    $$
    \Delta_F w\ge -C(n,\|w\|_{C^{0,1}(M)},\|u\|_{C^{0,1}(M)},\|f\|_{L^\infty(M)}).
    $$
\end{lemma}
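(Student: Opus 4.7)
My plan is to derive the stronger pointwise bound $\Delta_F w\ge 0$ from G\aa rding's inequality for the hyperbolic polynomial $\sigma_2$; the stated inequality $\Delta_F w\ge -C$ then follows immediately with $C\ge 0$.

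First I would fix an arbitrary point $p\in M$ and choose a local $g$-orthonormal frame at $p$ that diagonalizes the Weingarten map, so that $g_{ij}(p)=\delta_{ij}$ and $h^i_j(p)=\kappa_i\delta^i_j$. Convexity of $u$ gives $\kappa_i\ge 0$, while $\sigma_2(\kappa)=f>0$ places $\kappa=(\kappa_1,\dots,\kappa_n)$ in the G\aa rding cone $\Gamma_2^+$. In this frame
\[
F^{ij}(p)=(H-\kappa_i)\delta^{ij},\qquad H-\kappa_i=\sum_{j\ne i}\kappa_j\ge 0,
\]
so $F^{ij}$ is positive semi-definite at $p$.

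Next, I would pass to geodesic normal coordinates at $p$ (for the induced metric $g$), so that the Christoffel symbols vanish at $p$ and the partial Hessian of $w$ agrees with the covariant Hessian there. The 2-convexity hypothesis then states that the symmetric matrix $W=[w_{ij}(p)]$ has eigenvalues $\mu=(\mu_1,\dots,\mu_n)$ in $\Gamma_2^+$. Recognising
\[
\Delta_F w(p)=F^{ij}w_{ij}=\sigma_2^{ij}(\kappa)\,W_{ij}
\]
as the polarisation of $\sigma_2$ evaluated at $(\kappa,\mu)$, G\aa rding's inequality (equivalently, concavity of $\sigma_2^{1/2}$ on $\Gamma_2$) yields
\[
\sigma_2^{ij}(\kappa)\,W_{ij}\;\ge\;2\sqrt{\sigma_2(\kappa)\,\sigma_2(\mu)}\;=\;2\sqrt{f\cdot\sigma_2(\mu)}\;\ge\;0.
\]
Since $p$ is arbitrary, this gives $\Delta_F w\ge 0$ everywhere on $M$, which is stronger than the claimed lower bound.

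The main subtlety I anticipate is the compatibility of Hessian conventions: the matrix $g^{-1}D^2w$ appearing in the definition of 2-convexity versus the covariant Hessian implicit in the contraction $F^{ij}w_{ij}$. These coincide at $p$ in normal coordinates for $(M,g)$, so the G\aa rding step goes through pointwise. If one prefers to work in arbitrary coordinates, the discrepancy is of the form $F^{ij}\Gamma^k_{ij}\,\partial_k w$, which can be absorbed into a constant depending on $\|w\|_{C^{0,1}(M)}$, $\|u\|_{C^{0,1}(M)}$ and $\|f\|_{L^\infty(M)}$; this accounts for the dependencies recorded in the statement.
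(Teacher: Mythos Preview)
Your primary claim that $\Delta_F w \ge 0$ rests on the assertion that, in geodesic normal coordinates for $(M,g)$ at $p$, the object $g^{-1}D^2w$ appearing in the 2-convexity hypothesis coincides with the covariant Hessian $\nabla^2w$. This is false. In the paper $D^2w$ denotes the \emph{Euclidean} Hessian of $w$ in the graph coordinates $x$ (see both the proof of the lemma and the construction of $w$ in the proof of Theorem~\ref{Thm_kappa}), and $g^{-1}D^2w$ is not a tensor on $M$: it does not transform into $\nabla^2w$ under passage to normal coordinates. Concretely, in the graph coordinates one has $\nabla_i\nabla_jw = D_{ij}w - \Gamma^k_{ij}D_kw$ with $\Gamma^k_{ij}=g^{kl}u_l\,u_{ij}$; this Christoffel term carries a factor of $D^2u$, which is precisely the quantity we have no a priori control over. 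Normal coordinates make the Christoffel symbols vanish in the new $y$-frame, not in the $x$-frame where the hypothesis lives. Hence there is no reason for the covariant Hessian to lie in $\Gamma_2^+$, and in fact $\Delta_F w\ge 0$ is false in general: whenever $g^{kl}u_k\,D_lw>0$ and $\sigma_2(g^{-1}D^2w)$ is small, the correction term below drives $\Delta_Fw$ negative.

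Your fallback in the final paragraph, however, is correct and is exactly the paper's argument. In graph coordinates one splits
\[
\Delta_Fw \;=\; F^{ij}D_{ij}w \;-\; \bigl(F^{ij}\Gamma^k_{ij}\bigr)D_kw.
\]
The first term equals $\mathrm{Trace}(\textit{II})\,\mathrm{Trace}(g^{-1}D^2w)-\mathrm{Trace}(\textit{II}\,g^{-1}D^2w)\ge 0$ by the G\aa rding-type inequality you cite (only $\textit{II}\ge 0$ and $g^{-1}D^2w\in\Gamma_2$ are needed). For the second term the key identity---which you should make explicit---is that homogeneity gives $F^{ij}h_{ij}=2\sigma_2(\kappa)=2f$, so
\[
F^{ij}\Gamma^k_{ij}=g^{kl}u_l\,F^{ij}u_{ij}=g^{kl}u_l\cdot W\,F^{ij}h_{ij}=2fW\,g^{kl}u_l,
\]
which is controlled by $\|u\|_{C^{0,1}}$, $\|w\|_{C^{0,1}}$ and $\|f\|_{L^\infty}$. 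This yields $\Delta_Fw\ge -2fW\,g^{kl}u_k\,D_lw\ge -C$, i.e.\ the lemma. So the ``alternative'' is not a matter of taste: it \emph{is} the proof, and the stronger bound $\Delta_Fw\ge 0$ is unavailable; this is also why the statement records dependence on $\|w\|_{C^{0,1}}$, $\|u\|_{C^{0,1}}$ and $\|f\|_{L^\infty}$ rather than none at all.
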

\begin{proof}
We introduce the local coordinates,
\begin{eqnarray*}
 w_{ij}=D_{ij}w-g^{kl}u_kw_lD_{ij}u.
\end{eqnarray*} 
Using $\sigma_2(g^{-1}D^2w)>0$ and equation \eqref{eq:Fij}, we have
\begin{eqnarray*}
\Delta_F w &=& (H g^{ij} - h^i_k g^{kj}) D_{ij}w - \frac{\partial \sigma_2}{\partial h_{ij}} g^{kl} u_k w_l u_{ij} \\
&=& (H g^{ij} - h^i_k g^{kj}) D_{ij}w - \frac{\partial \sigma_2}{\partial h_{ij}}  h_{ij} W g^{kl} u_k w_l  \\
&=& (H g^{ij} - h^i_k g^{kj}) D_{ij}w - \frac{\partial \sigma_2}{\partial h_i^j}h_i^j W g^{kl} u_k w_l \\
&=&(H g^{ij} - h^i_k g^{kj}) D_{ij}w - 2 f W g^{kl} u_k w_l\\
&=&\mathrm{Trace}(\textit{II}\, )\,\mathrm{Trace}(g^{-1}D^2w)-\mathrm{Trace}(\textit{II} \, g^{-1}D^2w)-2fWg^{kl}u_kw_l\\
&\ge &-C(n,\|w\|_{C^{0,1}(M)},\|u\|_{C^{0,1}(M)},\|f\|_{L^\infty(M)}).
\end{eqnarray*} 
\end{proof}

\begin{prop}\label{lemma:curvature_W2p_estimate}
Suppose $u$ is a smooth convex solution to equation (\ref{eq:sigma_kappa}) in $  B_2(0)$. Let $w$ be a $2$-convex function on the graph of $u$, and $\Omega \subset B_1(0)$ satisfies $w\le u$ on $\partial \Omega$. Define $\varphi(t)$ as in Lemma \ref{lemma:curvature_Boundary_Jacobi}. Suppose that $f>0$ in $ \overline{B_2(0)}$, $f$ is smooth. Then for $p=1,2,\cdots$,
$$
    \int_{\Omega}[\varphi(w-u)]^{p-1}H^p dM\le p!\, C(n,\|w\|_{C^{0,1}(B_2(0))},\|u\|_{C^{0,1}(B_2(0))},\|f^{-1}\|_{L^{\infty}(B_2(0))},\|f\|_{C^{0,1}(B_2(0))}).
$$
\end{prop}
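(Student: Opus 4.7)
The plan is to prove the estimate by induction on $p$, using the boundary Jacobi inequality of Lemma \ref{lemma:curvature_Boundary_Jacobi} as a subsolution-type inequality for the quantity $G := \varphi(w-u)\,H = \eta H$, where $\eta := \varphi(w-u)$. Because $w \le u$ on $\partial\Omega$ and $\varphi(t) = (t^+)^4$, both $\eta$ and $\nabla\eta$ vanish on $\partial\Omega$, so all positive powers of $G$ and $\eta$ are admissible Dirichlet test functions on $\Omega$. Since the Codazzi equation gives $\nabla_j F^{ij} = 0$, the operator $\Delta_F$ enjoys the divergence structure
$$\int_\Omega \zeta\,\Delta_F G\,dM \;=\; -\int_\Omega F^{ij}\zeta_i G_j\,dM$$
for any $\zeta$ vanishing on $\partial\Omega$, and this is the workhorse of the iteration.

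For the base case $p=1$, I would observe that both $G$ and $\nabla G = H\nabla\eta + \eta\nabla H$ vanish on $\partial\Omega$, so $\int_\Omega \Delta_F G\,dM = 0$; inserting this into the Jacobi inequality, using Lemma \ref{lemma:2-convex_on_M} together with $\Delta_F u = F^{ij}h_{ij}/W = 2f/W$ to bound $\Delta_F(w-u)$ from below, and integrating $\int_\Omega \eta\,\Delta f\,dM = -\int_\Omega \nabla\eta\cdot\nabla f\,dM$ by parts to reduce $\Delta f$ to $\nabla f$, yields the bound $\int_\Omega H\,dM \le C$. For the inductive step, I would multiply the Jacobi inequality by $G^{p-1}$ and integrate. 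Integration by parts on the left produces the nonpositive term $-(p-1)\int_\Omega G^{p-2}|\nabla_F G|^2\,dM$, whose prefactor $p-1$ is what will track the growth. The right-hand side decomposes into: a principal term $\int G^{p-1}\varphi'(w-u)H\,\Delta_F(w-u)\,dM$, which, through Lemma \ref{lemma:2-convex_on_M} and the identity $\varphi'(w-u) = 4(w-u)_+^3$, is comparable to the target $\int \eta^{p-1}H^p\,dM$; a $\Delta f$ contribution handled by a further integration by parts (admissible since $G^{p-1}\eta$ vanishes on $\partial\Omega$); and a lower-order term $\int G^{p-1}\eta\,f^{-1}|\nabla f|^2\,dM$ directly controlled by the induction hypothesis at level $p-1$.

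The main obstacle is the handling of $\Delta f$: since only $f \in C^{0,1}$ is assumed, $\Delta f$ cannot be bounded pointwise, and the second integration by parts is forced. This produces cross terms in which $\nabla f$ is paired with $\nabla G$ (equivalently, with third-order derivatives of $u$), and these must be absorbed into the nonpositive quadratic term $(p-1)\int_\Omega G^{p-2}|\nabla_F G|^2\,dM$ via Cauchy-Schwarz and Young's inequality. The delicate bookkeeping is to arrange the absorption constants so that the resulting recursion takes the form $a_p \le p\cdot a_{p-1} + C_0$ with $C_0$ independent of $p$; only then does iterating from the base case produce the claimed $p!\,C$ growth rather than something of order $(Cp)^p$. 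This arithmetic is the heart of the argument, and it is precisely here that the quartic choice of $\varphi$ (ensuring both $\eta$ and $\nabla\eta$ vanish on $\partial\Omega$) and the clean divergence structure of $F^{ij}$ pay off, since every boundary integral disappears and nothing interferes with the counting of the factor of $p$.
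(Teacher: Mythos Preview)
Your proposal has a genuine gap: testing the boundary Jacobi inequality against $G^{p-1}$ does \emph{not} by itself produce an upper bound on the target $a_p := \int_\Omega \eta^{p-1}H^p\,dM$. After integrating by parts, the inequality reads
\[
-(p-1)\int_\Omega G^{p-2}|\nabla_F G|^2\,dM \;\ge\; \int_\Omega G^{p-1}\,\varphi'(w-u)\,\Delta_F(w-u)\,H\,dM \;+\; (\text{$f$-terms}),
\]
and the ``principal term'' on the right is only bounded from \emph{below}: Lemma~\ref{lemma:2-convex_on_M} together with $\Delta_F u = 2f/W$ gives $\Delta_F(w-u)\ge -C$, not $\Delta_F(w-u)\ge c_0>0$. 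Hence the principal term is $\ge -C\int \eta^{p-1}(w-u)_+^3 H^p \ge -C'a_p$, and the resulting inequality reads $C'a_p \ge (p-1)\int G^{p-2}|\nabla_F G|^2 + \cdots$, which bounds the gradient term in terms of $a_p$, not the other way around. The extra factor $(w-u)_+^3$ in $\varphi'$ also vanishes at $\partial\Omega$, so even if the sign were favorable it could not dominate $a_p$. The same issue afflicts your base case: the identity $\int_\Omega \Delta_F G\,dM = 0$ combined with the Jacobi inequality only yields a \emph{lower} bound on $\int \varphi' H$, and this is not $\int_\Omega H\,dM$ anyway.

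What the paper actually does is supply a \emph{second}, independent identity that furnishes the missing upper bound. Using $H = W\Delta u$ (intrinsic Laplacian) one writes
\[
a_{p+1} = \int_\Omega (\varphi H)^p\, W\,\Delta u\,dM
\]
and integrates by parts against $\nabla u$; since $|\nabla u|\le 1$ on $M$, Cauchy--Schwarz gives
\[
a_{p+1} \le \delta p \int_\Omega |\nabla(\varphi H)|^2 H^{p-2}\varphi^{p-1}\,dM + \frac{Cp}{\delta}\,a_p.
\]
Only \emph{now} is the Jacobi inequality invoked (tested against $G^p$, not $G^{p-1}$), together with the ellipticity $F^{ii}\ge f/H$, to bound the gradient term back by $C_2 a_{p+1} + \text{(terms controlled by $a_p$)}$. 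Choosing $\delta$ small against $C_2$ absorbs the $a_{p+1}$ contribution and yields the clean recursion $a_{p+1}\le pC_* a_p$. The base case $\int_\Omega H\,dM \le C$ is obtained separately from the divergence form $H = W\,\mathrm{div}(Du/W)$ and a standard cutoff in $B_2$, with no use of the Jacobi inequality at all.
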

\begin{proof}
Let $p\ge 1$ be an integer. We choose $[\varphi(w-u)H]^p$ as a test function. By Lemma \ref{lemma:curvature_Boundary_Jacobi}, we have
\begin{equation}\label{eq:temp_1}
\begin{aligned}
 &p\int_{\Omega}|\nabla_F(\varphi H)|^2(\varphi H)^{p-1} dM\\
 &\le -\int_{\Omega}[\varphi' \Delta_F (w-u)]H^{p+1}\varphi ^p dM
 -\int_{\Omega}[\Delta f- C(1)f^{-1}|\nabla
  f|^2]H^p\varphi ^{p+1} dM.
\end{aligned}
\end{equation}
Let $0<\delta<1$, $C=1+\|Du\|^2_{L^\infty(\Omega)}$, we integrate by parts to estimate that
\begin{eqnarray*}
    \int_{\Omega}H^{p+1}\varphi ^pdM&=&\int_{\Omega}(\varphi H)^{p}\Delta u W dM
    \le -C\int_{\Omega}\nabla [(\varphi H)^{p}]\nabla u dM\\
    &\le& \delta p\int_{\Omega} [\nabla(\varphi H)]^2 H^{p-2}\varphi^{p-1} dM
    +\frac {pC^2}{\delta} \int_{\Omega}H^{p}\varphi ^{p-1}dM,\\
    \int_{\Omega}H^{p}\varphi ^{p+1}\Delta fdM&=&
-\int_{\Omega}\varphi \nabla f\nabla([\varphi H]^{p}) dM-\int_{\Omega}(\varphi H)^{p}\nabla f\nabla\varphi  dM\\
&\le& \delta p\int_{\Omega} |\nabla(\varphi H)|^2 H^{p-2}\varphi^{p-1} dM
    +\frac {pC_1}\delta  \int_{\Omega}H^{p}\varphi ^{p-1}dM,
\end{eqnarray*}
where $C_1=C(n,\|\varphi\|_{C^{0,1}(\Omega)}, \|Df\|_{L^\infty(\Omega)})$. Recall $(H-\kappa_i)H\ge f>0$ for each $1\le i\le n$, we have
$$
|\nabla_F(\varphi H)|^2(\varphi H)^{p-1}\ge f|\nabla(\varphi H)|^2 H^{p-2}\varphi^{p-1} .
$$
 For the first term at the right hand side of (\ref{eq:temp_1}), recall $\sigma_2(\kappa)=f$, we have
\begin{eqnarray*}
\Delta_F u =  \frac{2f}{W} \le 2\|f\|_{L^\infty(\Omega)}.
\end{eqnarray*}
Using Lemma \ref{lemma:2-convex_on_M}, we get 
$$
\varphi'\Delta_F(w-u)\ge -C(n,\|w\|_{C^{0,1}(\Omega)},\|u\|_{C^{0,1}(\Omega)},\|f\|_{L^\infty(\Omega)}):=-C_2.
$$ 
Let $\delta=\frac1{4(C_2+1)}$, using (\ref{eq:temp_1}) we get the recursion formula for $p\ge 1$,
$$
\int_{\Omega}H^{p+1}\varphi ^pdM\le pC_*\int_{\Omega}H^{p}\varphi ^{p-1}dM,
$$
where $C_*=C_*(C,C_1,C_2,\|f^{-1}\|_{L^\infty(\Omega)})$. 
When $p=1$, we choose a cutoff function $\phi \in C_0^{\infty}(B_2(0)),\phi =1 $ in $B_1(0)$, then 
\begin{eqnarray*}
    \int_\Omega H dM & \leq & C \int_\Omega H dx = C \int_\Omega div(\frac{Du}{W}) dx \\
    & \leq & C \int_{B_2} \phi^2 div(\frac{Du}{W}) dx \leq C \int_{B_2} \frac{D\phi \cdot Du}{W} dx \\
    &\leq & C(\|Du\|_{L^{\infty}(B_2(0)}).
\end{eqnarray*}

\end{proof}

\subsection{Pogorelov-type \texorpdfstring{$C^{1,1}$}{} estimates}
As long as $H$ is subharmonic, we can extend the results in Urbas \cite{MR1777141}\cite{MR1840289} to Lipschitz variable right hand side $f(x)$, and prove a $C^{1,1}$ bound by the $W^{2,p}$ norm of $u$ for equation (\ref{eq:sigma_kappa}).
Still it hardly leads to a complete $C^{1,1}$ estimate, since the section of the level set is not necessarily normalized in Proposition \ref{lemma:curvature_W2p_estimate}. Here we prove a Pogorelov-type $C^{1,1}$ estimate that will match Proposition \ref{lemma:curvature_W2p_estimate}.

\begin{prop}\label{lemma:curvarure_C11ptype_estimate}
Let $w$ be a smooth function  on $M$. Let $u$ be a smooth convex solution to equation (\ref{eq:sigma_kappa}) in $ B_2(0)$. Define $\varphi(t)$ as in Lemma \ref{lemma:curvature_Boundary_Jacobi}. Suppose that $f>0$ in $ \overline {B_2(0)}$, $f$ is smooth. Then
$$
    \|[\varphi(w-u)]^{2n}H\|_{L^{\infty} (B_{\frac{1}{2}}(0))}\le C \int_{B_{1}(0)}\varphi^{n} H^{n+1}dx.
$$
\end{prop}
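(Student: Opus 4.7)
The plan is to establish this Pogorelov-type estimate by a Moser iteration built on the boundary Jacobi inequality of Lemma \ref{lemma:curvature_Boundary_Jacobi}. Since $\varphi(w-u)$ is supported in $\Omega:=\{w>u\}\cap B_1(0)$, the quantity $v:=\varphi(w-u)H$ vanishes on $\partial\Omega$, so any integration by parts against powers of $v$ produces no boundary contribution. First I would expand Lemma \ref{lemma:curvature_Boundary_Jacobi} into a usable degenerate subsolution inequality. Using $\Delta_F u=2f/W\le C$ and the fact that for a general smooth $w$ one has $|\Delta_F w|\le C(\|w\|_{C^2})\,\mathrm{tr}(F^{ij})=C(n-1)H\,\|w\|_{C^2}$, the Jacobi inequality becomes
\[
\Delta_F v\ \ge\ -C_0\bigl((w-u)^3 H^2+\varphi\bigr)+\bigl[\Delta f-C f^{-1}|\nabla f|^2\bigr]\varphi
\]
on $\Omega$, where $C_0$ depends only on $n,\|w\|_{C^2},\|u\|_{C^1},\|f^{-1}\|_{L^\infty}$. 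The $\Delta f$ term will be integrated by parts later, exactly as in the proof of Proposition \ref{lemma:curvature_W2p_estimate}, so that only $\|f\|_{C^{0,1}}$ regularity is needed.

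The main step is to test this inequality against $v^{p-1}\varphi^{m_p}$ for integers $p\ge 1$ and a suitable sequence of cutoff exponents $m_p$. Integration by parts gives a gradient estimate of the form
\[
(p-1)\int_\Omega F^{ij}v_iv_j\,v^{p-2}\varphi^{m_p}\,dM\ \le\ C\int_\Omega (H+1)\,v^{p-1}\varphi^{m_p}\,dM+\text{cross terms},
\]
and the coercivity
\[
F^{ij}\xi_i\xi_j\ \ge\ \frac{f}{H}|\xi|^2
\]
(already used in Proposition \ref{lemma:curvature_W2p_estimate}, following from $H(H-\kappa_i)\ge\sigma_2(\kappa)=f$) converts this into a weighted gradient bound for $v^{p/2}\varphi^{m_p/2}$. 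Applying the classical Sobolev inequality on $B_1(0)\subset\mathbb{R}^n$ to $V:=v^{p/2}\varphi^{m_p/2}$ (using that $dM$ and $dx$ are comparable for a graph with bounded gradient) yields a Moser-type recursion
\[
\bigl\|v\,\varphi^{\tilde m_{p+1}}\bigr\|_{L^{\chi p}(B_1)}^{\,\chi p}\ \le\ C\,p^{c}\,\bigl\|v\,\varphi^{\tilde m_p}\bigr\|_{L^{p}(B_1)}^{\,p},
\]
with Sobolev exponent $\chi=n/(n-2)$ and cutoff exponents $\tilde m_p$ chosen to grow by a summable amount at each step, precisely enough to absorb the degenerate weight $1/H$ produced by the coercivity.

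Starting the iteration at $p=n+1$ with an initial cutoff exponent corresponding to the RHS $\int_{B_1}\varphi^{n}H^{n+1}\,dx$ and passing $p\to\infty$, the Sobolev scaling delivers an $L^\infty$ bound, while the geometric series of increments $\sum_p(\tilde m_{p+1}-\tilde m_p)$ converges and totals to $2n-1$, producing the stated estimate $\|\varphi^{2n}H\|_{L^\infty(B_{1/2})}\le C\int_{B_1}\varphi^{n}H^{n+1}\,dx$. A spatial cutoff between $B_{1/2}$ and $B_1$ is inserted so that the iteration localises correctly from $B_1$ to $B_{1/2}$.

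The main obstacle is the degeneracy of $F^{ij}$, whose smallest eigenvalue $H-\kappa_i$ is only $\ge f/H$: each integration-by-parts step loses a factor of $H$ that must be recaptured by an additional power of $\varphi$ in the weight, and one must verify that the sequence $\tilde m_p$ remains summable (rather than linear in $p$) so that the limit exponent $2n$ is finite. This balance is possible only because of the convexity of $u$, which prevents the level set $\{w=u\}$ from being too degenerate, and because $\varphi$ vanishes to order four at $\partial\Omega$, giving just enough vanishing to absorb the $1/H$ at each step. The mismatched powers $\varphi^{2n}$ on the left versus $\varphi^{n}$ on the right are exactly the price paid for this degeneracy, and distinguishes the present Pogorelov-type bound from a bare $L^\infty$ estimate on $H$.
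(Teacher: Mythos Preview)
Your overall plan---a Moser iteration driven by the coercivity $F^{ij}\ge (f/H)\delta_{ij}$---is the right one, but two points separate your sketch from the paper's argument, and the first is a genuine gap relative to the proposition as stated.

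\textbf{Which Jacobi inequality.} The paper does \emph{not} use the boundary Jacobi inequality (Lemma~\ref{lemma:curvature_Boundary_Jacobi}) here. It uses the trace Jacobi inequality for $H$ alone (Lemma~\ref{lemma:curvature_Trace_Jacobi}), and tests it against $H^{p}\varphi^{q}\phi^{2}$ with a spatial cutoff $\phi$. In that scheme $\varphi=\varphi(w-u)$ enters only through its values and first derivatives after integration by parts, so the constant depends only on $\|w\|_{C^{0,1}}$, exactly as the proposition claims. Your route through the boundary Jacobi inequality brings in $\varphi'\,\Delta_F(w-u)$, and to bound $\Delta_F w$ you invoke $|\Delta_F w|\le C(\|w\|_{C^2})\,H$. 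That produces a constant depending on $\|w\|_{C^2}$, which is strictly more than the proposition allows (here $w$ is only assumed smooth, not $2$-convex as in Proposition~\ref{lemma:curvature_W2p_estimate}, so Lemma~\ref{lemma:2-convex_on_M} is unavailable). In the eventual application $w$ is an explicit quadratic and this would not matter, but it does not prove the proposition as written.

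\textbf{How the degeneracy is absorbed.} Your description that ``each step loses a factor of $H$ that must be recaptured by an additional power of $\varphi$'' inverts the actual mechanism. Keeping the exponents of $H$ and $\varphi$ separate, the paper's Caccioppoli step reads
\[
\int_{B_{1/2}}|D(H^{p/2}\varphi^{q/2})|^{2}\,dx\ \le\ \frac{C}{\rho^{2}}\int_{B_{1/2+\rho}}H^{p+2}\varphi^{q-2}\,dx,
\]
where the $+2$ on the $H$-exponent comes from $|F^{ij}|\le CH$ in the cross terms (together with the $H^{-1}$ from coercivity), and the $-2$ on the $\varphi$-exponent comes from Cauchy--Schwarz. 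Sobolev embedding then sends $(p,q)\mapsto(\gamma p,\gamma q)$ with $\gamma=n/(n-2)$, and the combined recursion $p_{k}=\gamma^{-1}p_{k-1}+2$, $q_{k}=\gamma^{-1}q_{k-1}-2$ is run with the constraint $q/p\le 2n$ from the terminal pair $(p_{0},q_{0})=(\gamma^{k_{0}},2n\gamma^{k_{0}})$ down to $(p_{k_{0}},q_{k_{0}})$ with $p_{k_{0}}\le n+1$ and $q_{k_{0}}\ge n$; letting $k_{0}\to\infty$ yields the $L^{\infty}$ bound. If you insist on bundling $v=\varphi H$, the starting weight must be $\varphi^{-1}$ (since $\varphi^{n}H^{n+1}=v^{n+1}\varphi^{-1}$), which already signals that the decoupled $(p,q)$ bookkeeping is the natural one, and your asserted total increment ``$2n-1$'' does not quite match the actual endpoints.
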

where $C=C(n,\|u\|_{C^{0,1}(B_1(0))},\|w\|_{C^{0,1}(B_1(0))},\|f\|_{C^{0,1}(B_1(0))},\|f^{-1}\|_{L^{\infty}(B_1(0))})$.
\begin{proof}
Let $0<\rho<\frac12$, choose a cut-off function $\phi\in C^\infty_0(B_{\frac{1}{2}+\rho}(0))$ with $\phi=1$ in $B_{\frac{1}{2}}(0)$, $\phi\ge 0$, $|D\phi|\le 8\rho^{-1} $. Let $p\ge 1,q\ge 2$ be a pair of positive constants satisfying $\frac{q}{p}\le 2n $. According to Lemma \ref{lemma:curvature_Trace_Jacobi}, $H$ is subharmonic, we have
$$
\int_{B_1(0)}H^p[\varphi(w-u)]^{q}\phi^2\Delta_FH dM\ge\int_{B_1(0)}[{\Delta f}-C(1)f^{-1}|\nabla f|^2]H^p[\varphi(w-u)]^{q}\phi^2 dM.
$$
Recall $(H-\kappa_i)H\ge f$ for all $1\le i\le n$, we have $$|\nabla_F H|^2H\ge f|\nabla H|^2\ge C(\|Du\|_{L^\infty(B_1(0))}) f|DH|^2.$$
Integrating by parts the above inequality and using $H\ge \sqrt{f}>0$, we get
\begin{align*}
\int_{B_1(0)}|D H|^2H^{p-2}\varphi^q\phi^2dM
&\le C_1\int_{B_{{\frac12}+\rho}(0)}(|\nabla\phi|^2+|\nabla f|^2+|\nabla \varphi|^2)H^{p+2}\varphi^{q-2}dM \\
& \le \frac{C_2}{\rho^2} \int_{B_{{\frac12}+\rho}(0)} H^{p+2}\varphi^{q-2} dx,
\end{align*}
where $C_1=C(\|u\|_{C^{0,1}(B_1)},\|\varphi\|_{L^\infty(B_1)},\|f^{-1}\|_{L^\infty(B_1)})$, $C_2= C_1\cdot( 1+\|f\|^2_{C^{0,1}(B_1)}+\|\varphi\|^2_{C^{0,1}(B_1)})$. 
Then we have
\begin{align*}
\int_{B_{\frac{1}{2}}(0)}|D[H^{\frac p2}\varphi^{\frac q2}]|^2dx
\le \frac{C_2p^2q^2}{\rho^2}\int_{B_{{\frac{1}{2}}+\rho}(0)}H^{p+2}\varphi^{q-2}dx,
\end{align*}
and
\begin{align*}
\int_{B_{\frac{1}{2}}(0)}|H^{\frac p2}\varphi^{\frac q2}|^2 dx
\le C_1\int_{B_{{\frac{1}{2}}+\rho}(0)}H^{p+2}\varphi^{q-2}dx \le \frac{C_2p^2q^2}{\rho^2}\int_{B_{{\frac{1}{2}}+\rho}(0)}H^{p+2}\varphi^{q-2}dx .
\end{align*}
For $n>2$, we get an iteration formula by Sobolev embedding theorem
\begin{align*}
\int_{B_{\frac{1}{2}}(0)}H^{\frac {np}{n-2}}\varphi^{\frac {nq}{n-2}} dx
& \le [\frac{C_2p^2q^2}{\rho^2}\int_{B_{{\frac{1}{2}}+\rho}(0)}H^{p+2}\varphi^{q-2}dx]^{\frac {n}{n-2}} \\
& \le [\frac{C_3 p^4}{\rho^2}\int_{B_{{\frac{1}{2}}+\rho}(0)}H^{p+2}\varphi^{q-2}dx]^{\frac {n}{n-2}},
\end{align*}
where $ C_3 =  4n^2 C_2 $, since we have required $\frac{q}{p} \le 2n $.
Denote $\gamma=\frac{n}{n-2}$, we fix an integer $k_0\ge\frac{\ln n}{\ln\gamma}$, and take $p_0 = \gamma^{k_0}$, $q_0 = 2np_0$ to initiate the iteration. For $k = 1,\cdots,k_0$, let
\begin{eqnarray*}
    p_{k} &=& \gamma^{-1} p_{k-1}+2=\gamma^{-k}p_0+2\sum^{k-1}_{i=0}\gamma^{-i}, \\
    q_{k} &=& \gamma^{-1} q_{k-1}-2=\gamma^{-k}q_0-2\sum^{k-1}_{i=0}\gamma^{-i}, \\
    r_k &=& \frac{1}{2}+\sum^{k}_{i=1}2^{-(k_0-i+2)}.
\end{eqnarray*}
We can check that $p_k > n$ and $q_k \ge 2 $ satisfy $\frac{q_k}{p_k} \le 2n$ all along.
Then we can rewrite the formula as
$$
\int_{B_{r_{k-1}}}H^{p_{k-1}}\varphi^{q_{k-1}}dx 
\le [C_3\, 2^{2(k_0-k+2)} (p_k-2)^4 \int_{B_{r_{k}}}H^{p_k}\varphi^{q_{k}}dx]^{\gamma}.
$$
Therefore, an iteration process implies
\begin{align*}
\int_{B_{\frac{1}{2}}(0)}H^{p_0}\varphi^{q_{0}}dx
&\le C_3^{\sum^{k_0}_{k=1}\gamma^k}2^{2\sum^{k_0}_{k=1}(k_0-k+2)\gamma^k}\prod ^{k_0}_{k=1}(p_k-2)^{4\gamma^k}[\int_{B_{r_{k_0}}(0)}H^{p_{k_0}}\varphi^{q_{k_0}}dx]^{\gamma^{k_0}}\\
&\le C_3^{\sum^{k_0}_{k=1}\gamma^k} 4^{\sum^{k_0}_{k=1}(k_0-k+2)\gamma^k} \prod^{k_0}_{k=1}(n\gamma^{-k}p_0)^{4\gamma^k} [\int_{B_{r_{k_0}}(0)}H^{p_{k_0}}\varphi^{q_{k_0}}dx]^{\gamma^{k_0}} \\
& = (n^4 C_3)^{\sum^{k_0}_{k=1}\gamma^k} 4^{\sum^{k_0}_{k=1}(k_0-k+2)\gamma^k} \gamma^{4\sum^{k_0}_{k=1}(k_0-k)\gamma^k} [\int_{B_{r_{k_0}}(0)}H^{p_{k_0}}\varphi^{q_{k_0}}dx]^{p_0}.
\end{align*}

Notice that $\frac{1}{\gamma^{k_0}} \sum^{k_0}_{k=1}\gamma^k $ and $ \frac{1}{\gamma^{k_0}} \sum^{k_0}_{k=1}(k_0-k)\gamma^k $ can both be bounded by some constant $C(n)$. So we get
\begin{align*}
\|\varphi^{2n}H\|_{L^{p_0} (B_{\frac{1}{2}}(0))} = [\int_{B_{\frac{1}{2}}(0)}H^{p_0}\varphi^{q_{0}}dx]^\frac 1{p_0}
&\le C(n)\cdot C_3^{C(n)} \int_{B_{1}(0)}H^{p_{k_0}}\varphi^{q_{k_0}}dx.
\end{align*}
From the choice of $p_0, q_0 $ and $k_0$, we can verify that $p_{k_0} \le n+1 $ and $q_{k_0} \ge n$. So the above estimate implies 
$$ \|\varphi^{2n}H\|_{L^{p_0} (B_{\frac{1}{2}}(0))} \leq C_4 \int_{B_{1}(0)}H^{n+1}\varphi^{n}dx, $$
where $C_4= C(n) C_3^{C(n)}\cdot C(\|f^{-1}\|_{L^\infty(B_1(0))},\|\varphi\|_{L^\infty(B_1(0))})$. Let $k_0\rightarrow +\infty$, that is $p_0 \to +\infty$, we have the conclusion when $n> 2$. The case $n=2$ is simply the Sobolev embedding theorem that $W^{1,2}\hookrightarrow C^0$. 
\end{proof}

\subsection{Proof to Theorem \ref{Thm_kappa}}
\begin{proof}
We refer to Mooney \cite{MR4246798} that the Pogorelov-type estimate for $\sigma_2$ equation results in a substantial interior Hessian bound for convex solutions. Denote $ W:= \sqrt{1+|Du|^2}.$   
Given a function $v$ in $\mathbb R^n$, we introduce the following symmetric matrix, see Caffarelli-Nirenberg-Spruck \cite{Caffarelli1986starshape}:
\begin{eqnarray*}
H_v =(I_n-\frac{Du\otimes Du}{W(1+W)}) {D^2 v} (I_n-\frac{Du\otimes Du}{W(1+W)}),
\end{eqnarray*}
where $D^2 v$ is the Hessian matrix and $u$ is the graph function for $M$. It can be checked that $H_v$ has the same eigenvalues as $g^{-1}D^2v$. Denote 
$$ P = I_n-\frac{Du\otimes Du}{W(1+W)}.$$
Then $P$ is positive definite and $ 0 < c_0 \leq \|P\| \leq 1$ for some $c_0=c_0(\|Du\|_{L^\infty (B_2(0))})>0$.
Suppose $D^2 u$ and $H_u$ has eigenvalues $\lambda_1 \ge \cdots \ge \lambda_n \ge 0 $ and $\lambda'_1 \ge \cdots \ge \lambda'_n \ge 0 $ respectively. Recall the Courant minimax principle for symmetric matrix, we have 
\begin{align*}
    \lambda'_k & = \max_{dim(S)=k} \min_{0\neq x \in S} \frac{\langle H_u x,x \rangle}{\|x\|^2} = \max_{dim(S)=k} \min_{0\neq x \in S} \frac{\langle(D^2 u)Px,Px\rangle}{ \|x\|^2} \\
    & \le \max_{dim(S)=k} \min_{0\neq x \in S} \frac{\langle(D^2 u) Px,Px\rangle}{\|Px\|^2} = \max_{dim(S)=k} \min_{0\neq y \in S} \frac{\langle(D^2 u)\, y,y\rangle}{\|y\|^2} = \lambda_k,
\end{align*}
and therefore 
$$
\sigma_2(D^2 u) \ge \sigma_2(H_u) =  W^2 \sigma_2(\kappa) = W^2 f \ge W^2 f_0 > 0.
$$ 
Following Mooney's proof of Proposition 4.1 in \cite{MR4246798}, we have for any $r>0$, there exists $\delta = \delta(n,\|u\|_{C^{0,1}(B_1)},r,f_0)>0$ such that if $L$ is a supporting linear function to $u$ at $0$, then 
$$ \{ u < L + \delta \} \subset\subset T_r$$
for some $n-2$-dimensional subspace $T$ of $\mathbb{R}^n$, where $T_r$ denotes the $r$-neighbourhood of $T$.

Take $L(x)= u(0)+x \cdot Du(0)$, then there exists a constant $ \delta >0$ such that after rotation, $ u > L + \delta$ on $\{|(x_1,x_2)|=r\}\cap B_1(0)$. Consider the following function
$$ w(x)=\delta[M(x_1^2+x_2^2)- (x_3^2+\cdots+x_n^2) + \frac{1}{4}] + L(x). $$
Denote $\mu_1 \geq \cdots \geq \mu_n$ and $\mu'_1 \geq \cdots \geq \mu'_n$ the eigenvalues of $D^2 w$ and $H_w$ respectively. We can see that
$$ \mu_1 = \mu_2 = 2\delta M >0, \ \ \mu_3 = \cdots \mu_n = -2\delta <0.$$
We have
\begin{align*}
    \mu'_1 & = \max_{x\in\mathbb{R}^n} \frac{\langle H_w x,x \rangle}{\|x\|^2}\\
    & \ge \frac{\langle P(D^2 w)PP^{-1}E_1,P^{-1}E_1\rangle}{\,\|P^{-1}E_1\|^2} = \frac{\langle (D^2 w) E_1, E_1 \rangle}{\,\|P^{-1}E_1\|^2} \\
    & \ge C \mu_1 = 2C\delta M >0.
\end{align*}
where $C=C(\|u\|_{C^{0,1}(B_2(0))})>0$.  Similarly,
\begin{align*}
    \mu'_2 & = \max_{dim(S)=2} \min_{0\neq x \in S} \frac{\langle H_w x,x \rangle}{\|x\|^2}\\
    & \ge \min_{0\neq x \in span\{P^{-1}E_1,P^{-1}E_2\}} \frac{\langle H_w x,x\rangle }{\|x\|^2} \ge C \mu_1 = 2C\delta M >0.
\end{align*}
As for the smallest eigenvalue, we have
\begin{align*}
    \mu'_n & = \min_{x\in\mathbb{R}^n} \frac{\langle H_w x,x\rangle}{\|x\|^2}\\
    & \le \frac{\langle P(D^2 w)PP^{-1}E_n,P^{-1}E_n\rangle }{\,\|P^{-1}E_1\|^2} = \frac{\langle (D^2 w) E_n, E_n\rangle}{\,\|P^{-1}E_1\|^2} <0.
\end{align*}
Take $y\in \mathbb{R}^n$ satisfies $H_w y = \mu'_n y$ and $\|y\|=1$, then we have
\begin{align*}
    0 > \mu'_n & =\langle H_w y,y \rangle\ = \langle D^2 w Py,Py\rangle\\
    &  \ge\ \frac{\langle D^2 w Py,Py\rangle}{\|Py\|^2} \ge \min_{x\in\mathbb{R}^n} \frac{\langle(D^2w) x,x\rangle}{\|x\|^2} = \mu_n = -2\delta .
\end{align*}
So we have 
\begin{align*}
    \sigma_2(H_w) & = \sigma_2(\mu'_1,\cdots,\mu'_n) \\ 
    & \ge \mu'_1 \mu'_2 - C(n)\mu'_1 |\mu'_n| = \mu_1'[2CM\delta- C(n) \delta ] \ge 0
\end{align*}
if we choose $M = M(\|u\|_{C^{0,1}(B_2(0))}) > C(n)C^{-1}$. Subsequently, we take $r = \sqrt{\frac{1}{4(M+1)}}$. We can check that this $w$ meets our requirements:
\begin{enumerate}
    \item $ u(0) < w(0) $, since $\frac{1}{4}>0$,
    \item $\sigma_2(g^{-1}D^2 w) \ge 0 $,
    \item $ w-L < \delta$ on $\{|(x_1,x_2)|=r\}\cap B_1(0)$, that is $Mr^2 + \frac{1}{4} < 1$,
    \item $w-L < 0 $ on $\{|(x_1,x_2)|\leq r\}\cap \partial B_1(0)$, that is $(M+1)r^2 + \frac{1}{4} < 1$. 
\end{enumerate}
It follows that $u>w$ on the boundary of $\{|(x_1,x_2)|\le r\}\cap B_1(0)$. Let $\Omega$ denote the connected component of the set $\{u<w\}$ which contains $x=0$. Take 
$$
    \varphi(x):=
    \begin{cases}
        [w(x)-u(x)]^{4} \ \ \ & \text{if}\ x \in \Omega \\
        \ 0  & \text{if}\ x \in B_2(0)\backslash\Omega.
    \end{cases}
$$

Applying Proposition \ref{lemma:curvature_W2p_estimate} and Proposition \ref{lemma:curvarure_C11ptype_estimate}, we get
$$
|H(0)|\le C(n, \delta,\|u\|_{C^{0,1}(B_2(0))},\|f\|_{C^{0,1}(B_2(0))},\|f^{-1}\|_{L^{\infty}(B_{2}(0))}),
$$
the conclusion follows.
\end{proof}

\section{Estimates for Hessian equations}
In this section we deal with the Hessian equation case. Similar to the curvature equation, we can introduce divergence structure for $\sigma_2$ operator to integrate the Jacobi inequality by parts, we also refer to Shankar-Yuan \cite{shankar2020hessian} for this technique.
Let
$$
F(D^2u)=\sigma_2(D^2u)=\frac12[(\Delta u)^2-|D^2u|^2]=f,
$$
denote $[F^{ij}]_{n\times n}=\Delta u I_n-D^2u$, the following notations will be used throughout this section,
$$
\Delta_F v=\sum^n_{i,j=1}F^{ij}\partial_{ij}v=\sum^n_{i,j=1}\partial_j(F^{ij}\partial_{i}v), \ |\nabla_F v|^2=\sum^n_{i,j=1}F^{ij}D_i vD_j v.
$$
\begin{lemma}\label{lemma:Heesian_Trace_Jacobi}
Let $u$ be a smooth convex solution to equation (\ref{eq:sigma_lambda}). Suppose that $f$ is smooth, $f\ge 1$. Then for each constant $\epsilon\in(0,1)$ we have
\begin{equation}\label{eq:Hessian_Trace_Jacobin1}
   \begin{aligned}
\Delta_F\Delta u-(2-\epsilon)(\Delta u)^{-1}|\nabla_F\Delta u|^2\ge{\Delta f}-C(\epsilon)|Df|^2.
   \end{aligned}
\end{equation}
\end{lemma}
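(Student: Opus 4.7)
My plan is to follow the same outline as Lemma~\ref{lemma:curvature_Trace_Jacobi}, adapted to the Euclidean Hessian setting. The key simplification is that mixed third partial derivatives commute in Euclidean space, so there are no Codazzi or Riemannian curvature commutator terms to carry; consequently the proof is shorter than its curvature counterpart.

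First I compute the first and second derivatives of $\sigma_2(D^2u) = f$ directly from $F = \tfrac12[(\Delta u)^2 - |D^2u|^2]$. Differentiating once yields $F^{ij}u_{ijk} = f_k$, and summing $F_{kk}$ over $k$ gives, after rearrangement, the identity
$$\Delta_F(\Delta u) = \Delta f - |D(\Delta u)|^2 + \sum_{i,j,k} u_{ijk}^{\,2}.$$
This is the Euclidean analogue of the third-derivative identity derived in the curvature case through the Codazzi and Gauss equations, but notably cleaner because $u_{ijkl}$ are symmetric.

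Next, at a fixed point I diagonalize $D^2u = \mathrm{diag}(\lambda_1,\ldots,\lambda_n)$, so $F^{ii} = \Delta u - \lambda_i$ and the off-diagonal entries vanish. The quantity $\Delta_F(\Delta u) - \delta (\Delta u)^{-1}|\nabla_F \Delta u|^2$ then decomposes into $\Delta f + \sum_k Q_k$ plus the manifestly non-negative terms $6\sum_{i<j<k} u_{ijk}^{\,2}$, where
$$Q_k \;=\; 3\sum_{i\neq k} u_{iik}^{\,2} + u_{kkk}^{\,2} - \Bigl(1+\delta\tfrac{\Delta u - \lambda_k}{\Delta u}\Bigr)\Bigl(\sum_i u_{iik}\Bigr)^{\!2}.$$
This is exactly the $Q_k$ of Lemma~\ref{lemma:curvature_Trace_Jacobi} under the correspondence $h_{iik} \leftrightarrow u_{iik}$, $\kappa \leftrightarrow \lambda$, $H \leftrightarrow \Delta u$, and the same Shankar--Yuan matrix-inequality argument with $\delta = 2-\epsilon/2$ shows non-negativity of the auxiliary quadratic $\hat Q_k$, using only Newton's identity $(\Delta u)^2 = \sum_i \lambda_i^2 + 2f$ and the elementary bound $\lambda_k^2(\Delta u - \lambda_k)/\Delta u \le f$ (valid in the positive cone).

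The linear-in-$f_k$ remainder is $Q_k \ge -6|(\Delta u)_k f_k|/\Delta u$. Cauchy's inequality with parameter $\hat\epsilon = \epsilon f/2$ yields $Q_k \ge -\tfrac{\epsilon f}{2(\Delta u)^2}(\Delta u)_k^{\,2} - \tfrac{C}{\epsilon f}f_k^{\,2}$, and here is the single place where the hypothesis $f \ge 1$ is essential: it converts $C/(\epsilon f)\cdot f_k^{\,2}$ into $C(\epsilon)\,f_k^{\,2}$, explaining the weaker right-hand side $-C(\epsilon)|Df|^2$ compared with $-C(\epsilon)f^{-1}|\nabla f|^2$ of Lemma~\ref{lemma:curvature_Trace_Jacobi}. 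Finally, the gradient remainder is absorbed into the extra $\epsilon/2$ gap between $\delta$ and $2-\epsilon$ via $f \le \Delta u(\Delta u - \lambda_k)$, which I will deduce from $(\Delta u - \lambda_k)^2 \ge \sigma_2(\lambda|k)$ (Newton's inequality in the $n-1$ remaining variables, trivially valid for $n=2$). The sole technical point requiring care is this last absorption; every other step is a direct transcription of the curvature proof, now without commutator terms.
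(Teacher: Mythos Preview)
Your proposal is correct and follows essentially the same approach as the paper's own proof: differentiate twice, diagonalize, isolate the quadratic $Q_k$, apply the Shankar--Yuan matrix lemma with $\delta = 2-\epsilon/2$, and absorb the $f_k$-remainder into the $\epsilon/2$ gap using $f \le \Delta u(\Delta u - \lambda_k)$. The only cosmetic difference is that the paper chooses $\hat\epsilon = \epsilon/2$ directly (invoking $f\ge 1$ via $\Delta u(\Delta u-\lambda_k)\ge 1$) rather than your $\hat\epsilon = \epsilon f/2$ followed by $f\ge 1$; these are equivalent, and your justification of $f \le \Delta u(\Delta u - \lambda_k)$ via $\sigma_1(\lambda|k)^2 \ge \sigma_2(\lambda|k)$ is valid.
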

\begin{proof}
Differentiate equation (\ref{eq:sigma_lambda}) with respect to $x_k$ twice,
$$
\sum^n_{i,j=1}\partial_k F^{ij} \partial_{ij}u_k+\sum^n_{i,j=1}F^{ij}\partial_{ij}u_{kk}=f_{kk}.
$$
Summing up the equality we get
\begin{align*}
    \sum^n_{i,j=1}F^{ij}\partial_{ij}\Delta u &= -\sum^n_{i,j,k=1}\partial_k F^{ij}\partial_{ij}u_k+\Delta f
    =-\sum^n_{i,j,k=1}(\Delta u_k\delta_{ij}-u_{ijk})u_{ijk}+\Delta f\\
    &=\sum^n_{i,j,k=1}u^2_{ijk}-\sum^n_{k=1}(\Delta u_k)^2+\Delta f\ge 3\sum^n_{i\neq k}u^2_{iik}+\sum^n_{k=1}u^2_{kkk}-\sum^n_{k=1}(\Delta u_k)^2+\Delta f.
\end{align*}
Let $\delta\ge0$ be a nonegative constant, choose a coordinate system such that $D^2u$ is diagonal at a point, where we have
\begin{equation}\label{eq:DeltaF_convex}
\begin{aligned}
\Delta_F\Delta u -\delta\frac{|\nabla_F \Delta u|^2}{\Delta u}
\ge
3\sum^n_{i\neq k}u^2_{iik}+\sum^n_{k=1}u^2_{kkk}-\sum^n_{k=1}(1+\delta\frac{\Delta u-\lambda_k}{\Delta u})(\Delta u_k)^2+\Delta f.
\end{aligned}
\end{equation}
Differentiate equation (\ref{eq:sigma_lambda}) once we have $\sum^n_{i,j=1}F^{ij}u_{ijk}=f_k$ for each $1\le k\le n$.
Let
\begin{equation}\label{eq:Qk_convex}
Q_k=3\sum^n_{i\neq k}u^2_{iik}+u^2_{kkk}-(1+\delta\frac{\Delta u-\lambda_k}{\Delta u})[\sum^n_{i=1}(1-\frac{\Delta u-\lambda_i}{\Delta u})u_{iik}+\frac{f_k}{\Delta u}]^2. 
\end{equation}
By the diagonalize lemma mentioned in Lemma \ref{lemma:curvature_Trace_Jacobi}, we want to show that
$$
\hat Q_k=1-\frac1{3}\sum_{i\neq k}(1+\delta\frac{\Delta u-\lambda_k}{\Delta u})(1-\frac{\Delta u-\lambda_i}{\Delta u})^2
-(1+\delta\frac{\Delta u-\lambda_k}{\Delta u})(1-\frac{\Delta u-\lambda_k}{\Delta u})^2\ge 0.
$$
We have
\begin{align*}
    \hat Q_k&\ge1-\frac1{(\Delta u)^2}[\frac1{3}(1+\delta)\sum^n_{i\neq k}\lambda_i^2+\lambda_k^2+\delta\frac{\lambda^2_k(\Delta u-\lambda_k)}{\Delta u}]\\
    &\ge1-\frac1{(\Delta u)^2}[\frac1{3}(1+\delta)\sum^n_{i\neq k}\lambda_i^2+\lambda_k^2+\delta f].
\end{align*}
Clearly $\hat Q_k\ge0$ if we choose $\delta=2-\frac\epsilon2$, since $(\Delta u)^2= \sum^n_{i=1}\lambda_i^2+2f$. By (\ref{eq:Qk_convex}) we  get
\begin{align*}
Q_k\ge -2(1+\delta\frac{\Delta u-\lambda_k}{\Delta u})[\sum^n_{i=1}(1-\frac{\Delta u-\lambda_i}{\Delta u})u_{iik}]\frac{f_k}{\Delta u}-(1+\delta\frac{\Delta u-\lambda_k}{\Delta u})(\frac{f_k}{\Delta u})^2.
\end{align*}
Recall that $\Delta u\ge1$, we choose a small constant $\hat\epsilon=\epsilon/{2}$,
\begin{equation}\label{eq:Q1_convex}
Q_k\ge -\frac{\hat\epsilon}{(\Delta u)^2}(\Delta u_k)^2-\frac{C(1)}{\hat\epsilon}|Df|^2.
\end{equation}
By (\ref{eq:DeltaF_convex}) (\ref{eq:Qk_convex}) (\ref{eq:Q1_convex}) we get
$$
\Delta_F\Delta u -(2-\epsilon)\frac{|\nabla_F \Delta u|^2}{\Delta u}\ge \Delta f- C(\epsilon)|Df|^2.
$$
\end{proof}

\begin{lemma}[Boundary Jacobi inequality]\label{lemma:Hessian_BTrace_Jacobi}
Let $u$ be a smooth convex solution to equation (\ref{eq:sigma_lambda}) in $B_2(0)$.
 Let $\varphi(t)=(t^+)^4 $, $t\in \mathbb{R}^1$ be a $C^2$ function. Suppose that $f$ is smooth, $f\ge 1$. Then for each smooth function $w(x)$, 
\begin{align*}
\Delta_F [\varphi(w-u)\Delta u ]
&\ge [\varphi'(w-u)\Delta_F(w-u)]\Delta u + [{\Delta f}-C(1)|Df|^2]\varphi(w-u).
\end{align*}

\end{lemma}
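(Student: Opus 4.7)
The proof mirrors that of Lemma \ref{lemma:curvature_Boundary_Jacobi}: on $\{w \le u\}$ the inequality is trivial since $\varphi(0)=\varphi'(0)=0$, so the work is pointwise at points where $w-u>0$. The strategy is to expand $\Delta_F(\varphi(w-u)\,\Delta u)$ via the product rule, absorb the cross term through a weighted Cauchy--Schwarz, and then apply two ``absorbed'' Jacobi-type inequalities separately to the factor $\varphi$ and to the factor $\Delta u$.

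Concretely, I would first compute by the chain rule
\begin{align*}
\Delta_F[\varphi(w-u)] &= \varphi''(w-u)|\nabla_F(w-u)|^2 + \varphi'(w-u)\Delta_F(w-u),\\
|\nabla_F[\varphi(w-u)]|^2 &= [\varphi'(w-u)]^2\,|\nabla_F(w-u)|^2.
\end{align*}
A direct check for $\varphi(t)=(t^+)^4$ gives $\varphi\varphi''-\tfrac{2}{3}(\varphi')^2 = \tfrac{4}{3}(t^+)^6\ge 0$. Combined with $|\nabla_F(w-u)|^2\ge 0$, which holds since the convexity of $u$ together with $f\ge 1$ makes the matrix $F^{ij}=\Delta u\,I_n-D^2u$ positive semi-definite (its eigenvalues are $\Delta u-\lambda_i\ge 0$), this yields
\[
\Delta_F \varphi - \tfrac{2}{3}\varphi^{-1}|\nabla_F \varphi|^2 \;\ge\; \varphi'(w-u)\,\Delta_F(w-u).
\]
Separately, Lemma \ref{lemma:Heesian_Trace_Jacobi} with $\epsilon=1/2$ supplies
\[
\Delta_F \Delta u - \tfrac{3}{2}(\Delta u)^{-1}|\nabla_F \Delta u|^2 \;\ge\; \Delta f - C(1)|Df|^2.
\]

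Next, I expand
\[
\Delta_F(\varphi\,\Delta u) \;=\; (\Delta u)\,\Delta_F\varphi \;+\; \varphi\,\Delta_F \Delta u \;+\; 2F^{ij}\varphi_i (\Delta u)_j,
\]
and handle the cross term by Cauchy--Schwarz for the positive semi-definite bilinear form: $|F^{ij}\varphi_i(\Delta u)_j|\le |\nabla_F\varphi|\cdot|\nabla_F\Delta u|$, followed by AM--GM with weight $\lambda=\tfrac{3\varphi}{2\Delta u}$ (legitimate because $\Delta u\ge\sqrt{2f}\ge\sqrt{2}$ and $\varphi>0$ in the working region). This gives
\[
2F^{ij}\varphi_i(\Delta u)_j \;\ge\; -\tfrac{2\Delta u}{3\varphi}|\nabla_F\varphi|^2 \;-\; \tfrac{3\varphi}{2\Delta u}|\nabla_F \Delta u|^2.
\]
Substituting this into the expansion, the negative terms cancel exactly with the gradient-squared pieces inside the two bracketed ``absorbed'' Jacobi inequalities above, and one reads off
\[
\Delta_F(\varphi\,\Delta u) \;\ge\; [\varphi'(w-u)\Delta_F(w-u)]\,\Delta u \;+\; [\Delta f - C(1)|Df|^2]\,\varphi(w-u).
\]

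The only point requiring care is that $F^{ij}$ be positive semi-definite, so that Cauchy--Schwarz applies and so that $|\nabla_F(w-u)|^2\ge 0$; both are guaranteed by the convexity hypothesis on $u$ and the normalization $f\ge 1$. Everything else is a direct transcription of the curvature-case argument, so I do not anticipate any serious technical obstacle --- the main conceptual content (choice of $\varphi=(t^+)^4$ tuned so that $\varphi\varphi''-\tfrac{2}{3}(\varphi')^2\ge 0$, and the coupled weights $\tfrac{2}{3}$ and $\tfrac{3}{2}$ in the two absorbed Jacobi inequalities that reciprocate through AM--GM) has already been exposed in Lemma \ref{lemma:curvature_Boundary_Jacobi}.
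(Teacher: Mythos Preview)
Your proof is correct and follows essentially the same route as the paper's own argument: chain-rule expansion of $\Delta_F\varphi$, the check that $\varphi\varphi''-\tfrac{2}{3}(\varphi')^2\ge 0$, invocation of Lemma \ref{lemma:Heesian_Trace_Jacobi} with $\epsilon=1/2$, and absorption of the cross term $2F^{ij}\varphi_i(\Delta u)_j$ via Cauchy--Schwarz/AM--GM with the reciprocal weights $\tfrac{2}{3}$ and $\tfrac{3}{2}$. If anything, you are slightly more explicit than the paper in justifying the positive semi-definiteness of $F^{ij}$ needed for these steps.
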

\begin{proof}
We begin by direct calculation at the points that $w-u>0$,
\begin{align*}
\Delta_F[\varphi(w-u)]& =\varphi''(w-u)|\nabla_F (w-u)|^2+\varphi'(w-u)\Delta_F(w-u),\\
|\nabla_F[\varphi(w-u)]|^2 & = [\varphi'(w-u)]^2|\nabla_F (w-u)|^2.
\end{align*}
Then we get
\begin{align*}
&\Delta_F[\varphi(w-u)]-\frac23|\nabla_F[\varphi(w-u)]|^2[\varphi(w-u)]^{-1} \\
&=\{\varphi''(w-u)-\frac23[\varphi(w-u)]^{-1}[\varphi'(w-u)]^2\}|\nabla_F (w-u)|^2+\varphi'(w-u)\Delta_F(w-u)\\
&\ge \varphi'(w-u)\Delta_F(w-u).
\end{align*}
We also know from Lemma \ref{lemma:Heesian_Trace_Jacobi} that
$$
\Delta_F\Delta u -\frac{3}{2}(\Delta u)^{-1}{|\nabla_F \Delta u|^2}\ge \Delta f- C(1)|Df|^2.
$$
Now we consider the boundary Jacobi inequality for the function $\varphi(w-u)\Delta u$ which vanishes at the level set $w=u$, and choose $\epsilon=\frac12$ in inequality (\ref{eq:Hessian_Trace_Jacobin1}),
\begin{align*}
\Delta_F[\varphi\Delta u]
&= \Delta u\Delta_F\varphi+\varphi\Delta_F\Delta u+2\sum^n_{i,j=1}F^{ij}\partial_i\varphi\partial_j\Delta u
\\
&\ge\{\Delta_F\varphi-\frac23\varphi^{-1}|\nabla_F\varphi|^2\}\Delta u
+\{\Delta_F\Delta u -\frac32(\Delta u)^{-1}{|\nabla_F \Delta u|^2}\}\varphi\\
&\ge [\varphi'\Delta_F(w-u)]\Delta u+[\Delta f- C(1)|Df|^2]\varphi.
\end{align*}

\end{proof}

\subsection{Pogorelov-type \texorpdfstring{$W^{2,p}$}{} estimates}

\begin{definition}
We say that a $C^2$ function $w$ in $\mathbb{R}^n$ is $2$-convex, if 
$$ \sigma_k(D^2w)>0, \ \ k=1,2. $$
\end{definition}

\begin{prop}\label{lemma:Hessian_W2p_estimate}
 Suppose $u$ is a smooth convex solution to equation (\ref{eq:sigma_lambda}) in $  B_2(0)$. Let $w$ be a $2$-convex function in $ \mathbb{R}^n $, and $\Omega \subset B_1(0)$ satisfies $w\le u$ on $\partial \Omega$. Define $\varphi(t)$ as in Lemma \ref{lemma:Hessian_BTrace_Jacobi}. Suppose that $f\ge1$, $f$ is smooth in $B_2(0)$. Then for $p=1,2,\cdots$,
$$
    \int_{\Omega}[\varphi(w-u)]^{p-1}(\Delta u)^p dx \le p!C(n,\|w\|_{C^{0,1}(B_2(0))},\|u\|_{C^{0,1}(B_2(0))},\|f\|_{C^{0,1}(B_2(0))}).
$$
\end{prop}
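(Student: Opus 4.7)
The plan is to mirror the proof of Proposition~\ref{lemma:curvature_W2p_estimate}, replacing the manifold operator by the Euclidean $\Delta_F = F^{ij}\partial_{ij}$ with $F^{ij}=\Delta u\,\delta_{ij}-u_{ij}$, and invoking Lemma~\ref{lemma:Hessian_BTrace_Jacobi} in place of Lemma~\ref{lemma:curvature_Boundary_Jacobi}. The structural fact I record at the outset is that $F^{ij}$ is divergence-free ($\partial_j F^{ij} = \partial_i \Delta u - \partial_i \Delta u = 0$), so $\Delta_F v = \partial_j(F^{ij}\partial_i v)$. Combined with the vanishing of $\eta:=\varphi(w-u)\Delta u$ on $\partial \Omega$, this permits clean integration by parts of $\Delta_F \eta$ against $\eta^p$; after invoking Lemma~\ref{lemma:Hessian_BTrace_Jacobi} one obtains
\begin{equation*}
p\int_\Omega |\nabla_F \eta|^2 \eta^{p-1}\,dx \le -\int_\Omega [\varphi'\Delta_F(w-u)]\Delta u\,\eta^p\,dx - \int_\Omega [\Delta f - C|Df|^2]\psi\,\eta^p\,dx,
\end{equation*}
where $\psi := \varphi(w-u)$.

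First I would establish the Hessian analogue of Lemma~\ref{lemma:2-convex_on_M}: Euler's identity gives $\Delta_F u = F^{ij}u_{ij} = 2\sigma_2(D^2 u) = 2f$; and since both $D^2 u$ and $D^2 w$ lie in the Garding cone $\Gamma_2^+$ (the former by convexity of $u$ together with $f \ge 1$, the latter by the 2-convex hypothesis on $w$), Garding's inequality yields $\Delta_F w = \sigma_1(D^2 u)\sigma_1(D^2 w) - \mathrm{tr}(D^2 u\,D^2 w) \ge 2\sqrt{\sigma_2(D^2 u)\sigma_2(D^2 w)} \ge 0$. Hence $\varphi'\Delta_F(w-u) \ge -C$ on the support of $\varphi$, and the first right-hand integral is controlled by $C\int_\Omega \psi^p (\Delta u)^{p+1}\,dx$. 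I would then integrate this last quantity by parts via $\Delta u = \mathrm{div}(Du)$, and the $\Delta f$-term by parts against $\psi^{p+1}(\Delta u)^p$, in each case applying Young's inequality to split the result into $\delta p \int_\Omega |D(\psi \Delta u)|^2 \psi^{p-1} (\Delta u)^{p-2}\,dx + C\delta^{-1} p\int_\Omega \psi^{p-1}(\Delta u)^p\,dx$.

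The crucial absorption step uses the spectral bound $\lambda_{\min}(F^{ij}) \ge (\Delta u)^{-1}$, which follows from the elementary observation that for convex $u$, $\lambda_k(\Delta u - \lambda_k) = \lambda_k \sum_{i\ne k}\lambda_i \ge \sigma_2(D^2 u) = f \ge 1$ for every $k$, hence $(\Delta u - \lambda_k)\Delta u \ge 1$. This gives $|\nabla_F \eta|^2 \eta^{p-1} \ge |D\eta|^2 \psi^{p-1}(\Delta u)^{p-2}$. Choosing $\delta$ small absorbs the gradient pieces into the left-hand side and produces the recursion $\int_\Omega \psi^p (\Delta u)^{p+1}\,dx \le p C_* \int_\Omega \psi^{p-1}(\Delta u)^p\,dx$. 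The base case $p=1$ reduces to $\int_\Omega \Delta u\,dx \le C(\|Du\|_{L^\infty})$, immediate from integration by parts against a smooth cutoff supported in $B_2(0)$ and equal to $1$ on $B_1(0)$, and iterating the recursion produces the asserted factorial bound.

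The main obstacle I anticipate is the absorption step: one must verify that the gradient pieces produced by Young's inequality are genuinely absorbable via the spectral lower bound $\lambda_{\min}(F^{ij}) \ge (\Delta u)^{-1}$. This bound depends essentially on the convexity of $u$ combined with $f \ge 1$; without both, $F^{ij}$ degenerates in the largest-eigenvalue direction and the Moser iteration collapses, exactly the phenomenon observed by Urbas. The remainder of the argument is bookkeeping entirely parallel to the proof of Proposition~\ref{lemma:curvature_W2p_estimate}.
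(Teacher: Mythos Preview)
Your approach is essentially identical to the paper's: test the boundary Jacobi inequality against $(\varphi\Delta u)^p$, use the divergence-free structure of $F^{ij}$ to integrate by parts, control $\Delta_F(w-u)$ via $\Delta_F u=2f$ and $\Delta_F w\ge 0$, integrate the $\Delta u$- and $\Delta f$-terms by parts with Young's inequality, absorb through the spectral lower bound $F^{ii}\ge(\Delta u)^{-1}$, and recurse to the base case $\int_\Omega\Delta u\,dx$. The paper does exactly this, with the same choice of $\delta$ and the same cutoff argument for $p=1$.

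One correction worth noting: your justification of the spectral bound has the inequality reversed---in fact $\lambda_k\sum_{i\ne k}\lambda_i=\sigma_2-\sigma_2(\lambda|_k)\le\sigma_2$ for $\lambda\ge 0$ (and equals $0$ if $\lambda_k=0$), so it cannot serve as a lower bound. The desired inequality $(\Delta u-\lambda_k)\Delta u\ge f$ is still true, e.g.\ because $(\sum_{j\ne k}\lambda_j)\sigma_1=\lambda_k\sum_{j\ne k}\lambda_j+(\sum_{j\ne k}\lambda_j)^2\ge \lambda_k\sum_{j\ne k}\lambda_j+\sigma_2(\lambda|_k)=\sigma_2$; the paper simply asserts it without proof.
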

\begin{proof}
Let $p\ge 1$ be an integer, we choose $[\varphi(w-u)\Delta u]^p$ as a test function. By Lemma \ref{lemma:Hessian_BTrace_Jacobi} we have
\begin{equation}\label{eq_temp_1}
\begin{aligned}
 &p\int_{\Omega}|\nabla_F[\varphi \Delta u]|^2[\varphi \Delta u]^{p-1} dx\\
 &\le -\int_{\Omega}[\varphi' \Delta_F (w-u)](\Delta u)^{p+1}\varphi ^p
 -\int_{\Omega}[\Delta f- C(1)|Df|^2](\Delta u)^p\varphi ^{p+1} dx.
\end{aligned}
\end{equation}
Choose a small constant $0<\delta<1$, we integrate by parts to estimate that
\begin{align*}
    &\int_{\Omega}(\Delta u)^{p+1}\varphi ^pdx=\int_{\Omega}[\varphi \Delta u]^{p}\Delta u dx
    =-\int_{\Omega}D([\varphi \Delta u]^{p})Du dx\\
    &\le \delta p\int_{\Omega} |D[\varphi \Delta u]|^2[ \Delta u]^{p-2}\varphi^{p-1} dx
    +\frac p{\delta} \int_{\Omega}|Du|^2(\Delta u)^{p}\varphi ^{p-1}dx,
\end{align*}
\begin{align*}
&\int_{\Omega}\Delta f(\Delta u)^{p}\varphi ^{p+1}dx=
-\int_{\Omega}\varphi DfD([\varphi \Delta u]^{p}) dx-\int_{\Omega}[\varphi \Delta u]^{p}DfD\varphi  dx\\
&\le \delta p\int_{\Omega} |D[\varphi \Delta u]|^2[ \Delta u]^{p-2}\varphi^{p-1} dx
    +\frac p\delta C_1 \int_{\Omega}(\Delta u)^{p}\varphi ^{p-1}dx,
\end{align*}
where $C_1=C(n,\|\varphi\|_{C^{0,1}(\Omega)}, \|Df\|_{L^\infty(\Omega)})$. Recall $(\Delta u-\lambda_i)\Delta u\ge f\ge 1$ for each $1\le i\le n$, thus
$$
|\nabla_F[\varphi \Delta u]|^2[\varphi \Delta u]^{p-1}\ge |D[\varphi \Delta u]|^2[ \Delta u]^{p-2}\varphi^{p-1} .
$$
Notice also that $\Delta_F u=2f\le 2\|f\|_{L^\infty(\Omega)}$, and since $w$ is 2-convex.
$$
\Delta_F w=\mathrm{Trace}([F_{ij}]_{n\times n}D^2w)= \Delta w\Delta u-\mathrm{Trace}(D^2wD^2u)\ge0,
$$
We get $ \varphi'\Delta_F(w-u) \ge -2\|\varphi\|_{C^{0,1}(\Omega)}\|f\|_{L^\infty(\Omega)}=-C_2$. Let $\delta=\frac1{4C_2+1}$, using (\ref{eq_temp_1}) we get the recursion formula for $p\ge 1$,
$$
\int_{\Omega}(\Delta u)^{p+1}\varphi ^pdx\le pC_*\int_{\Omega}(\Delta u)^{p}\varphi ^{p-1}dx,
$$
where $C_*=C_*(C,C_1,C_2,\|f^{-1}\|_{L^\infty(\Omega)})$.
When $p=1$, we choose a cutoff function $\phi \in C_0^{\infty}(B_2(0)),\phi =1 $ in $B_1(0)$, then 
\begin{eqnarray*}
    \int_\Omega \Delta u dx \leq \int_{B_2} \phi^2 div(Du) dx \leq 2 \int_{B_2} \frac{D\phi \cdot Du}{W} dx \leq C(\|Du\|_{L^{\infty}(B_2(0)}).
\end{eqnarray*}
\end{proof}

\subsection{Pogorelov-type \texorpdfstring{$C^{1,1}$}{} estimates}

\begin{prop}\label{lemma:Hessian_C11ptype_estimate}
Let $w$ be a smooth function in $\mathbb R^n$. Let $u$ be a smooth convex solution to equation (\ref{eq:sigma_lambda}) in $B_2(0) $. Define $\varphi(t)$ as in Lemma \ref{lemma:Hessian_BTrace_Jacobi}. Suppose that $f\ge1$ in $ \overline{B_2(0)} $, $f$ is smooth. Then
$$
    \|[\varphi(w-u)]^{2n}\Delta u\|_{L^{\infty} (B_{\frac12}(0))}\le C \int_{B_{1}(0)}\varphi^{n} (\Delta u)^{n+1}dx,
$$
where $C=C(n,\|w-u\|_{C^{0,1}(B_1(0))},\|f\|_{C^{0,1}(B_1(0))})$.
\end{prop}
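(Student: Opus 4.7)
The proof will be a direct adaptation of the Moser iteration scheme from Proposition \ref{lemma:curvarure_C11ptype_estimate} to the Euclidean setting, with $\Delta u$ playing the role of the mean curvature $H$. The structural inputs are now all at hand: Lemma \ref{lemma:Heesian_Trace_Jacobi} (with $\epsilon=1/2$) provides the weak subharmonicity $\Delta_F \Delta u \ge \Delta f - C|Df|^2$; the operator $\Delta_F$ is already in divergence form ($\partial_j F^{ij}=0$); and from $(\Delta u - \lambda_i)\Delta u \ge f \ge 1$ we obtain both the gradient control $|\nabla_F \Delta u|^2 \ge (\Delta u)^{-1}|D\Delta u|^2$ and the trace bound $\mathrm{Tr}(F^{ij}) = (n-1)\Delta u$, which in turn gives $|\nabla_F \varphi|^2 \le C\Delta u\,|D\varphi|^2$ and similarly for $\phi$.

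Fix $0 < \rho < 1/2$ and a cutoff $\phi \in C_0^\infty(B_{1/2+\rho}(0))$ with $\phi \equiv 1$ on $B_{1/2}(0)$ and $|D\phi| \le 8/\rho$. For integers $p \ge 1$ and $q \ge 2$ satisfying $q/p \le 2n$, I multiply the Jacobi inequality by $(\Delta u)^{p-1}\varphi^q \phi^2$ and integrate over $B_1(0)$. Integrating by parts on the $\Delta_F \Delta u$ term, absorbing the resulting cross terms by Cauchy--Schwarz, and integrating by parts on the $\Delta f$ term to transfer the derivative onto the test function (which is where the assumption $f \in C^{0,1}$ rather than $C^2$ is crucial) yields the Caccioppoli-type bound
$$
\int_{B_{1/2}(0)} \bigl|D\bigl[(\Delta u)^{p/2}\varphi^{q/2}\bigr]\bigr|^2\, dx \le \frac{C p^2 q^2}{\rho^2} \int_{B_{1/2+\rho}(0)} (\Delta u)^{p+2}\varphi^{q-2}\, dx,
$$
together with the companion $L^2$ bound. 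Here $C$ depends only on the quantities listed in the statement.

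Applying the Sobolev embedding $W^{1,2}(B_{1/2}(0)) \hookrightarrow L^{2\gamma}(B_{1/2}(0))$ with $\gamma = n/(n-2)$ (for $n>2$) produces the iteration step. I then take the same sequences as in the curvature case: choose $k_0 \ge \ln n / \ln \gamma$, set $p_0 = \gamma^{k_0}$ and $q_0 = 2np_0$, and define
$$
p_k = \gamma^{-1}p_{k-1} + 2, \qquad q_k = \gamma^{-1}q_{k-1} - 2, \qquad r_k = \tfrac{1}{2} + \sum_{i=1}^{k} 2^{-(k_0-i+2)}.
$$
These sequences satisfy $p_k > n$, $q_k \ge 2$, and $q_k/p_k \le 2n$ at every step, with $p_{k_0} \le n+1$ and $q_{k_0} \ge n$. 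Iterating the step $k_0$ times and tracking the product of constants (which telescopes thanks to the geometric decay $\gamma^{-k}$) gives
$$
\|\varphi^{2n}\Delta u\|_{L^{p_0}(B_{1/2}(0))} \le C \int_{B_1(0)} (\Delta u)^{n+1}\varphi^n\, dx
$$
with $C$ independent of $p_0$. Letting $p_0 \to \infty$ (i.e.\ $k_0 \to \infty$) yields the desired $L^\infty$ bound when $n > 2$; the case $n=2$ is immediate from the $W^{1,2} \hookrightarrow C^0$ embedding applied to the same Caccioppoli inequality.

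The main technical obstacle is the $\Delta f$ term in the Jacobi inequality: since $f$ is only Lipschitz, $\Delta f$ is merely a distribution, so it must be integrated by parts so that only $|Df|$ appears. This forces one to spend a derivative on the test function $(\Delta u)^{p-1}\varphi^q \phi^2$, which in turn is what pins down the constraint $q_k \ge 2$ throughout the Moser iteration and motivates the slightly unusual initial choice $q_0 = 2np_0$. The remaining bookkeeping---the trace control $\mathrm{Tr}(F^{ij}) = (n-1)\Delta u$ and the lower bound $|\nabla_F \Delta u|^2 \ge (\Delta u)^{-1}|D\Delta u|^2$---ensures that every Euclidean quantity can be compared to its $\Delta_F$-counterpart with constants depending only on $\|f^{-1}\|_{L^\infty}$ and $\|f\|_{C^{0,1}}$.
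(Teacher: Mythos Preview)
Your proposal is correct and follows essentially the same Moser iteration as the paper's own proof: the same cutoff setup, the same Caccioppoli inequality derived from Lemma \ref{lemma:Heesian_Trace_Jacobi}, the same Sobolev exponent $\gamma=n/(n-2)$, and the identical iteration sequences $p_k,q_k,r_k$ with initial data $p_0=\gamma^{k_0}$, $q_0=2np_0$. One minor remark: in this proposition $f$ is assumed smooth, so $\Delta f$ is a genuine function; the reason one integrates it by parts is not that $\Delta f$ is undefined but that the final constant must depend only on $\|f\|_{C^{0,1}}$.
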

\begin{proof}
Let $0<\rho<\frac12$, choose a cut-off function $\phi\in C^\infty_0(B_{\frac{1}{2}+\rho}(0))$ with $\phi=1$ in $B_{\frac{1}{2}}(0)$, $\phi\ge 0$, $|D\phi|\le 8\rho^{-1} $. Let $p\ge 1,q\ge 2$ be a pair of positive constants satisfying $1\le\frac qp\le 2n$. According to Lemma \ref{lemma:Heesian_Trace_Jacobi}, $\Delta u$ is subharmonic, we have
$$
\int_{B_1(0)}(\Delta u)^p[\varphi(w-u)]^{q}\phi^2\Delta_F\Delta u dx \ge\int_{B_1(0)}[{\Delta f}-C(1)|Df|^2](\Delta u)^p[\varphi(w-u)]^{q}\phi^2 dx.
$$
Integrating by parts the above inequality and recalling $\Delta u-\lambda_i\le \Delta u$, we get
\begin{align*}
\int_{B_{{\frac12}+\rho}(0)}|D\Delta u|^2(\Delta u)^{p-2}\varphi^q\phi^2dx
& \le C_1\int_{B_{{\frac12}+\rho}(0)}(|D\phi^2|+|Df|^2+|D\varphi|^2)(\Delta u)^{p+2}\varphi^{q-2}dx \\
& \le \frac{C_2}{\rho^2} \int_{B_{{\frac12}+\rho}(0)} (\Delta u)^{p+2}\varphi^{q-2} dx,
\end{align*}
where $C_1=C(\|\varphi\|_{C(B_1)}) $, and $C_2= C_1\cdot( 1+\|f\|^2_{C^{0,1}(B_1)}+\|\varphi\|^2_{C^{0,1}(B_1)})$.
Then we have
\begin{align*}
\int_{B_{\frac12}(0)}|D[(\Delta u)^{\frac p2}\varphi^{\frac q2}]|^2dx
\le \frac{C_2p^2q^2}{\rho^2}\int_{B_{{\frac12}+\rho}(0)}(\Delta u)^{p+2}\varphi^{q-2}dx,
\end{align*}
and
\begin{align*}
\int_{B_{\frac{1}{2}}(0)}|(\Delta u)^{\frac p2}\varphi^{\frac q2}|^2 dx
\le C_1\int_{B_{{\frac{1}{2}}+\rho}(0)}(\Delta u)^{p+2}\varphi^{q-2}dx \le \frac{C_2p^2q^2}{\rho^2}\int_{B_{{\frac{1}{2}}+\rho}(0)}(\Delta u)^{p+2}\varphi^{q-2}dx .
\end{align*}
For $n>2$, we get an iteration formula by Sobolev embedding theorem
\begin{align*}
\int_{B_{\frac12}(0)}(\Delta u)^{\frac {np}{n-2}}\varphi^{\frac {nq}{n-2}} dx
&\le [\frac{C_2p^2q^2}{\rho^2}\int_{B_{{\frac12}+\rho}(0)}(\Delta u)^{p+2}\varphi^{q-2}dx]^{\frac {n}{n-2}} \\
& \le [\frac{C_3 p^4}{\rho^2}\int_{B_{{\frac{1}{2}}+\rho}(0)}(\Delta u)^{p+2}\varphi^{q-2}dx]^{\frac {n}{n-2}},
\end{align*}
where $ C_3 =  4n^2 C_2 $, since we have required $\frac{q}{p} \le 2n $. Denote $\gamma=\frac{n}{n-2}$, we fix an integer $k_0\ge\frac{\ln n}{\ln\gamma}$, and take $p_0 = \gamma^{k_0}$, $q_0 = 2np_0$ to initiate the iteration. For $k = 1,\cdots,k_0$, let
\begin{eqnarray*}
    p_{k} &=& \gamma^{-1} p_{k-1}+2=\gamma^{-k}p_0+2\sum^{k-1}_{i=0}\gamma^{-i}, \\
    q_{k} &=& \gamma^{-1} q_{k-1}-2=\gamma^{-k}q_0-2\sum^{k-1}_{i=0}\gamma^{-i}, \\
    r_k &=& \frac{1}{2}+\sum^{k}_{i=1}2^{-(k_0-i+2)}.
\end{eqnarray*}

We can check that $p_k > n$ and $q_k \ge 2 $ satisfy $\frac{q_k}{p_k} \le 2n$ all along.
Then we can rewrite the formula as
$$
\int_{B_{r_{k-1}}}(\Delta u)^{p_{k-1}}\varphi^{q_{k-1}}dx 
\le [C_3\, 2^{2(k_0-k+2)} (p_k-2)^4 \int_{B_{r_{k}}}(\Delta u)^{p_k}\varphi^{q_{k}}dx]^{\gamma}.
$$
Therefore, an iteration process implies
\begin{align*}
\int_{B_{\frac{1}{2}}(0)}(\Delta u)^{p_0}\varphi^{q_{0}}dx
&\le C_3^{\sum^{k_0}_{k=1}\gamma^k}2^{2\sum^{k_0}_{k=1}(k_0-k+2)\gamma^k}\prod ^{k_0}_{k=1}(p_k-2)^{4\gamma^k}[\int_{B_{r_{k_0}}(0)}(\Delta u)^{p_{k_0}}\varphi^{q_{k_0}}dx]^{\gamma^{k_0}}\\
&\le C_3^{\sum^{k_0}_{k=1}\gamma^k} 4^{\sum^{k_0}_{k=1}(k_0-k+2)\gamma^k} \prod^{k_0}_{k=1}(n\gamma^{-k}p_0)^{4\gamma^k} [\int_{B_{r_{k_0}}(0)}(\Delta u)^{p_{k_0}}\varphi^{q_{k_0}}dx]^{\gamma^{k_0}} \\
& = (n^4 C_3)^{\sum^{k_0}_{k=1}\gamma^k} 4^{\sum^{k_0}_{k=1}(k_0-k+2)\gamma^k} \gamma^{4\sum^{k_0}_{k=1}(k_0-k)\gamma^k} [\int_{B_{r_{k_0}}(0)}(\Delta u)^{p_{k_0}}\varphi^{q_{k_0}}dx]^{p_0}.
\end{align*}

Notice that $\frac{1}{\gamma^{k_0}} \sum^{k_0}_{k=1}\gamma^k $ and $ \frac{1}{\gamma^{k_0}} \sum^{k_0}_{k=1}(k_0-k)\gamma^k $ can both be bounded by some constant $C(n)$. So we get
\begin{align*}
\|\varphi^{2n}\Delta u\|_{L^{p_0} (B_{\frac{1}{2}}(0))} = [\int_{B_{\frac{1}{2}}(0)}(\Delta u)^{p_0}\varphi^{q_{0}}dx]^\frac 1{p_0}
&\le C(n)\cdot C_3^{C(n)} \int_{B_{1}(0)}(\Delta u)^{p_{k_0}}\varphi^{q_{k_0}}dx.
\end{align*}
From the choice of $p_0, q_0 $ and $k_0$, we can verify that $p_{k_0} \le n+1 $ and $q_{k_0} \ge n$. So the above estimate implies 
$$ \|\varphi^{2n}\Delta u\|_{L^{p_0} (B_{\frac{1}{2}}(0))} \leq C_4 \int_{B_{1}(0)}(\Delta u)^{n+1}\varphi^{n}dx, $$
where $C_4= C(n) C_3^{C(n)}\cdot C(\|\varphi\|_{L^\infty(B_1(0))})$. Let $k_0\rightarrow +\infty$, that is $p_0 \to +\infty$, we have the conclusion when $n> 2$. The case $n=2$ is simply the Sobolev embedding theorem that $W^{1,2}\hookrightarrow C^0$. 
\end{proof}

\subsection{Proof to Theorem \ref{Thm_sigma}}
\begin{proof}
We refer to Mooney \cite{MR4246798}. We first suppose that $f\ge1$, and denote $\hat u=u-u(0)-x Du(0)$. Then $\sigma_2(D^2\hat u)=f\ge1$, $\hat u\ge 0$. By \cite{MR4246798} Proposition 4.1, there exists a constant $ \delta=\delta (n,\|\hat u\|_{L^\infty(B_1(0))})>0$, such that after a rotation, $\hat u> \delta$ on $\{|(x_1,x_2)|=\frac1{2n}\}\cap B_1(0)$. Consider the 2-convex function in $\mathbb{R}^n$
 $$
 w(x)=\delta[2(n-2)|(x_1,x_2)|^2-|(x_3,\cdots,x_n)|^2+\frac18].
 $$
Then $u>w$ on $\partial B_{\frac34}(0)$. The estimate follows by applying Proposition \ref{lemma:Hessian_W2p_estimate} and Proposition \ref{lemma:Hessian_C11ptype_estimate} on the connected component of the set $\{ x\, |\, u(x)<w(x) \}$ which contains $x=0$,
$$
|D^2 \hat u(0)|\le C(n, \delta,\|u\|_{C^{0,1}(B_2(0))},\|f\|_{C^{0,1}(B_2(0))}).
$$
Finally for general $f>0$, we scale by $\overline u=u\cdot \|f^{-1}\|_{L^{\infty}(B_{1}(0))}$ and the conclusion follows.
\end{proof}

\bibliography{References}

\end{document}